\newtheorem{thm}{Theorem}[section]
\newtheorem{prop}[thm]{Proposition}
\newtheorem{lem}[thm]{Lemma}
\theoremstyle{definition}
\newtheorem{dfn}[thm]{Definition}
\theoremstyle{remark}
\newtheorem{rem}[thm]{Remark}
\newcommand{\R}{\mathbb{R}}
\newcommand{\Z}{\mathbb{Z}}
\newcommand{\Mag}{\mathrm{Mag}}
\newcommand{\1}{\vec{1}}
\title[A proof for the positive definiteness of four point metric spaces]
{A direct proof for the positive definiteness of 
four point metric spaces}
\author{Kiyonori Gomi}
\address{
Department of Mathematics, 
Tokyo Institute of Technology, 
2-12-1 Ookayama, Meguro-ku, Tokyo, 152-8551, Japan.}
\email{kgomi@math.titech.ac.jp}
\subjclass[2010]{51F99, 54E35}
\keywords{metric space, positive definite, magnitude, inclusion-exclusion principle}
\begin{document}

\begin{abstract}
We provide a direct and elementary proof for the fact that every four point metric space is positive definite, which was first proved by Meckes based on some embedding theorems of metric spaces. As an outcome of the direct proof, we also provide a condition for the magnitude of a finite metric space to obey the inclusion-exclusion principle with respect to a specific choice of subspaces.
\end{abstract}

\maketitle

\tableofcontents


\section{Introduction}
\label{sec:introduction}

Let $X$ be a finite set, and $d$ a metric on $X$. The finite metric space $(X, d)$ is said to be \textit{positive definite} \cite{L1} if the \textit{similarity matrix} or the \textit{zeta matrix} of $(X, d)$
$$
\zeta_X = ( e^{-d(i, j)} )_{i, j \in X}
$$
is positive definite as a symmetric matrix. This property stems from the study of the \textit{magnitude} \cite{L1} of $(X, d)$. Intuitively, this is a numerical invariant which counts the number of points in $X$ taking the effect of the metric $d$. The magnitude of a positive definite metric space behaves nicely, and various conditions for the positive definiteness have been studied in \cite{L1,M}.

\medskip

It is clear that the $1$-point metric space is positive definite. In view of Sylvester's criterion, it is also clear that every $2$-point metric space is positive definite, because the determinant of the zeta matrix for $X_2 = \{ 1, 2 \}$ is
$$
\det \zeta_{X_2} = 
\begin{array}{|cc|}
1 & Z_{12} \\
Z_{21} & 1
\end{array}
=
1 - Z_{12}^2,
$$
where $Z_{ij} = e^{-d(i, j)}$ satisfies $1 - Z_{ij} > 0$ if $i \neq j$. We can see that $3$-point metric spaces $X_3 = \{ 1, 2, 3 \}$ are also positive definite in the same way using the following expression of the determinant \cite{L1} (Proposition 2.4.15)
\begin{align*}
\det \zeta_{X_3} 
&=
1 - Z_{12}^2 - Z_{13}^2 - Z_{23}^2
+ 2 Z_{12}Z_{13}Z_{23} \\
&=
(1 - Z_{12})(1 - Z_{13})(1 - Z_{23})
+ (1 - Z_{12})(Z_{12} - Z_{13}Z_{23}) \\
&\quad
+ (1 - Z_{13})(Z_{13} - Z_{12}Z_{23})
+ (1 - Z_{23})(Z_{23} - Z_{12}Z_{13}).
\end{align*}
Note that the triangle inequality is equivalent to $Z_{ij} - Z_{ik}Z_{kj} \ge 0$ for $i, j, k \in X$. There exists a $5$-point metric space which is not positive definite \cite{L1} (Example 2.2.7). Hence $n$-point metric spaces are generally not positive definite if $n \ge 5$. For $4$-point metric spaces, their positive definiteness is first established by Meckes \cite{M} (Theorem 3.6 (4)), where the method of the proof is to embed $4$-point metric spaces into a positive definite normed space. 

\medskip

It is plausible that one can show the positive definiteness of $4$-point metric spaces more directly without invoking an embedding theorem. However, such a proof seems to be not yet available in the literature. Then the purpose of this paper is to provide such a direct and elementary proof. 

The key to our proof is the formula for the determinant of the zeta matrix of the metric space $X_4 = \{ 1, 2, 3, 4 \}$ given by completing the square
$$
\det \zeta_{X_4} = - (1 - Z_{34}^2)(Z_{12} - b_0)^2 +
\frac{\Delta_{134}\Delta_{234}}{1 - Z_{34}^2},
$$
where $b_0$ is given by
$$
b_0 = 
\frac{Z_{13}Z_{23} + Z_{14}Z_{24} - Z_{14}Z_{23}Z_{34} - Z_{13}Z_{24}Z_{34}}
{1 - Z_{34}^2},
$$
and the determinants of the zeta matrices of the subspaces $\{ 1, 3, 4 \}$ and $\{ 2, 3, 4 \}$ are denoted by $\Delta_{134}$ and $\Delta_{234}$, respectively. Once the expression above is recognized, the elementary method can be applied to proving the positivity of $\det \zeta_{X_4}$, which leads to the positive definiteness of $X_4$.

\medskip

The key formula of $\det \zeta_{X_4}$ above seems to single out a particular metric on $X_4$, namely, one satisfying the equation $Z_{12} = b_0$. By a direct calculation, the magnitude $\Mag(X_4)$ of the metric space $X_4$ subject to $Z_{12} = b_0$ turns out to satisfy the so-called \textit{inclusion-exclusion principle} with respect to the subspaces $A = \{ 1, 3, 4 \}$ and $B = \{ 2, 3, 4 \}$: 
$$
\Mag(X_4) = \Mag(A) + \Mag(B) - \Mag(A \cap B).
$$
Furthermore, as will be established in Theorem \ref{thm:inclusion_exclusion}, this can be generalized to the $n$-point metric space $X_n = \{ 1, 2, \ldots, n \}$ with $n \ge 3$ and its subspaces $A = \{ 1, 3, 4, \ldots, n \}$ and $B = \{ 2, 3, 4, \ldots, n \}$. In addition, our condition $Z_{12} = b_0$ is generally not covered by a condition for the inclusion-exclusion principle which has been widely known \cite{L2,L1} (See Definition \ref{dfn:conditions}).

\medskip

It is natural to ask whether one can generalize the condition $Z_{12} = b_0$ for the inclusion-exclusion principle so as to be applicable to any choice of subspaces. We hope this problem to be solved in a future work.

\bigskip

The paper is organized as follows: In \S\ref{sec:proof} is presented our direct proof for the positive definiteness of four point metric spaces, after a few notations are introduced for the sake of clarity. \S\ref{sec:incl_excl} is devoted to the inclusion-exclusion principle under the condition $Z_{12} = b_0$. We start with a brief review of the magnitude of a finite metric space and the inclusion-exclusion principle widely known so far. Then the key formula is generalized, and our version of the inclusion-exclusion principle is established. The converse implication is also studied. Finally, the conditions for the inclusion-exclusion principle are compared.


\section{The proof}
\label{sec:proof}

\subsection{The positive definiteness}

The goal is to provide a direct proof of:

\begin{thm}[\cite{M}] \label{thm:meckes}
Every $4$-point metric space is positive definite.
\end{thm}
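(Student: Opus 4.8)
The plan is to prove that $\det \zeta_{X_4} > 0$ together with positive definiteness of all principal minors, and then invoke Sylvester's criterion. Since every $3$-point metric space is already known to be positive definite (hence $\Delta_{134} > 0$ and $\Delta_{234} > 0$, and likewise for all $3\times 3$ and smaller principal submatrices), it suffices to show $\det\zeta_{X_4} > 0$. The engine for this is the key formula quoted in the introduction,
$$
\det \zeta_{X_4} = - (1 - Z_{34}^2)(Z_{12} - b_0)^2 +
\frac{\Delta_{134}\Delta_{234}}{1 - Z_{34}^2},
$$
so I would first verify this identity by a direct computation: expand $\det\zeta_{X_4}$ as a polynomial in $Z_{12}$ (it is quadratic, since $Z_{12}$ appears in exactly two symmetric entries), read off its leading coefficient as $-(1-Z_{34}^2)$, complete the square to produce $b_0$ as the quoted rational expression, and identify the constant term with $\Delta_{134}\Delta_{234}/(1-Z_{34}^2)$ — this last identification is the place where a little care with the algebra is needed, but it should fall out cleanly once one writes $\Delta_{134}$ and $\Delta_{234}$ explicitly in the $Z_{ij}$.

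Granting the identity, positivity of $\det\zeta_{X_4}$ is \emph{not} immediate, because the first term on the right is negative; the real content is an upper bound on $(Z_{12}-b_0)^2$. So the main step is to bound $|Z_{12} - b_0|$ from above using the triangle inequalities. The natural strategy: since $b_0$ has denominator $1-Z_{34}^2 > 0$, reduce to showing
$$
(1 - Z_{34}^2)^2 (Z_{12} - b_0)^2 \le \Delta_{134}\Delta_{234},
$$
i.e.\ bound $(1-Z_{34}^2)Z_{12} - (Z_{13}Z_{23} + Z_{14}Z_{24} - Z_{14}Z_{23}Z_{34} - Z_{13}Z_{24}Z_{34})$. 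Here I would use the triangle-inequality form $Z_{ij} \ge Z_{ik}Z_{kj}$ repeatedly (so e.g.\ $Z_{12} \le Z_{13}/Z_{23}$-type bounds, or more robustly $Z_{13}Z_{23} - Z_{12} \le 0$ in the combinations $Z_{12} - Z_{13}Z_{24}Z_{34} \cdot(\ldots)$, etc.) to sandwich $(1-Z_{34}^2)Z_{12}$ between the two roots of the quadratic $\det\zeta_{X_4}(Z_{12}) = 0$, which are exactly $b_0 \pm \sqrt{\Delta_{134}\Delta_{234}}/(1-Z_{34}^2)$. Equivalently, and perhaps more transparently, one checks directly that $\det\zeta_{X_4} \ge 0$ at the two ``extreme'' admissible values of $Z_{12}$ determined by the triangle inequalities $Z_{12} \ge \max(Z_{13}Z_{23}^{-1}, \ldots)$ — wait, the correct constraints are $Z_{12} \ge Z_{1k}Z_{k2}$ for $k = 3,4$ and $Z_{1k} \ge Z_{12}Z_{2k}$, $Z_{2k}\ge Z_{12}Z_{1k}$ — so $Z_{12}$ ranges over a closed interval, and since $\det\zeta_{X_4}$ is a downward parabola in $Z_{12}$ it suffices to check nonnegativity at the two endpoints of that interval, where the expressions should factor through products of single triangle-inequality terms $Z_{ij} - Z_{ik}Z_{kj} \ge 0$ and terms $1 - Z_{ij} > 0$, in the same spirit as the quoted $\det\zeta_{X_3}$ formula.

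I expect the main obstacle to be exactly this last bookkeeping: showing that at each endpoint of the admissible $Z_{12}$-interval the determinant decomposes into a manifestly nonnegative combination of the basic quantities $1 - Z_{ij}$ and $Z_{ij} - Z_{ik}Z_{kj}$. There are several competing triangle inequalities and it is not obvious a priori which ones are active at which endpoint, nor that the strict inequalities $1 - Z_{ij} > 0$ suffice to upgrade $\ge 0$ to $> 0$ (one must rule out the degenerate case where $\det\zeta_{X_4} = 0$, which can happen only if some $Z_{ij} = 1$, i.e.\ a non-metric; so genuine metrics give strict positivity). A secondary, more mundane obstacle is simply carrying out the determinant expansion and the square-completion without sign errors, and confirming the stated form of $b_0$; but that is routine. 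If the endpoint-checking proves stubborn, the fallback is the continuity/deformation argument: the space of $4$-point metrics (up to scaling of $d$) is connected, $\det\zeta_{X_4}$ is continuous and nonvanishing on it (by the parabola argument, once one knows it is nonzero), and it is positive at one point (e.g.\ an equilateral-type configuration), hence positive throughout.
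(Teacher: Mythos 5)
Your outline reproduces the paper's own strategy: reduce to $\det\zeta_{X_4}>0$ via Sylvester's criterion, verify the completed-square identity, note that $\Delta$ is a concave quadratic in $Z_{12}$, and check positivity at the two ends of the interval in which the triangle inequalities confine $Z_{12}$, namely $b_-=\max\{Z_{13}Z_{23},Z_{14}Z_{24}\}$ and $b_+=\min\{Z_{23}/Z_{13},Z_{13}/Z_{23},Z_{24}/Z_{14},Z_{14}/Z_{24}\}$. The gap is that the endpoint verification, which is the entire technical content of the paper's proof, is exactly the step you leave as ``bookkeeping'' and hope will ``fall out'' as a manifestly nonnegative combination of the quantities $1-Z_{ij}$ and $Z_{ij}-Z_{ik}Z_{kj}$, in the spirit of the $3$-point formula. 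It does not fall out in one step. For instance, at the endpoint $Z_{12}=Z_{13}Z_{23}$ one gets
$$
\Delta({\scriptstyle Z_{12}=Z_{13}Z_{23}})
= -(1-Z_{23}^2)(Z_{14}-Z_{13}Z_{34})^2+(1-Z_{13}^2)\Delta_{234},
$$
which is again a concave quadratic (now in $Z_{14}$) with a genuinely negative term; the paper must iterate the endpoint reduction in $Z_{14}$, and then once more in $Z_{24}$ or $Z_{34}$, splitting into several subcases organized by permutation symmetry (its Lemmas on the constrained values $Z_{14}=Z_{34}/Z_{13}$, $Z_{23}=Z_{12}/Z_{13}$, etc.), before reaching expressions that are visibly positive. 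Moreover you need \emph{strict} positivity at the endpoints: genuine metrics can attain $Z_{12}=b_\pm$ (triangle equalities are allowed), so nonnegativity there does not suffice, and your assertion that $\det\zeta_{X_4}=0$ ``can happen only if some $Z_{ij}=1$'' is precisely the statement to be proved, not a fact you may invoke.

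The proposed fallback is also not a fallback. Connectivity (indeed convexity) of the space of $4$-point metrics plus positivity at one configuration would give positivity everywhere only if you already knew $\Delta$ never vanishes on that space, and that nonvanishing is equivalent to the estimate you are trying to avoid; the circularity is visible in your own phrasing (``nonvanishing \dots once one knows it is nonzero''). The $5$-point example with $\det\zeta<0$ shows that continuity and connectivity alone carry no information here: along a path from a positive-definite $5$-point space to that example the determinant does vanish. So the deformation argument cannot replace the endpoint analysis, and as it stands your proposal identifies the right reduction but omits the proof of its crucial step.
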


By Sylvester's criterion and the positive definiteness of $1$- $2$- and $3$-point metric spaces, Theorem \ref{thm:meckes} will follow  from:

\begin{thm} \label{thm:det}
For any $4$-point metric space, the determinant of its zeta matrix takes a positive value.
\end{thm}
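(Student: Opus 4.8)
The plan is to establish Theorem~\ref{thm:det} by proving the identity announced in the introduction,
\[
\det \zeta_{X_4} = -(1 - Z_{34}^2)(Z_{12} - b_0)^2 + \frac{\Delta_{134}\Delta_{234}}{1 - Z_{34}^2},
\]
and then reading off positivity from it. First I would fix notation: write $Z_{ij} = e^{-d(i,j)}$, record that $0 < Z_{ij} < 1$ for $i \neq j$ and that the triangle inequality is equivalent to $Z_{ij} \ge Z_{ik}Z_{kj}$ for all $i,j,k$, and note in particular $1 - Z_{34}^2 > 0$ so that dividing by it is harmless. I would also recall from the $3$-point case that $\Delta_{134} = \det\zeta_{\{1,3,4\}} > 0$ and $\Delta_{234} = \det\zeta_{\{2,3,4\}} > 0$, which the excerpt has already justified. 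Consequently the second term $\Delta_{134}\Delta_{234}/(1 - Z_{34}^2)$ is strictly positive; the whole game is to control the first, manifestly non-positive, term.

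The key formula itself I would obtain purely mechanically. Expanding $\det\zeta_{X_4}$ by cofactors (or just multiplying out the $4\times 4$ determinant), one sees that as a function of $Z_{12}$ it is a quadratic $-\,(1-Z_{34}^2)Z_{12}^2 + 2\beta Z_{12} + \gamma$ with leading coefficient $-(1-Z_{34}^2)$, where $\beta$ collects the terms linear in $Z_{12}$ and $\gamma$ the terms independent of it. Completing the square gives $-(1-Z_{34}^2)(Z_{12} - b_0)^2$ with $b_0 = \beta/(1-Z_{34}^2)$, plus a remainder $\gamma + \beta^2/(1-Z_{34}^2)$; the content of the identity is the claim that this remainder equals $\Delta_{134}\Delta_{234}/(1-Z_{34}^2)$, which is verified by direct algebraic comparison using the explicit $3\times 3$ determinant formula for $\Delta_{134}$ and $\Delta_{234}$ quoted in the introduction. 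This step is bookkeeping, not insight, so I would present the statement as a lemma and relegate the verification to a short computation.

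With the identity in hand, the remaining task is to show the right-hand side is positive, i.e.\ that
\[
\frac{\Delta_{134}\Delta_{234}}{1 - Z_{34}^2} > (1 - Z_{34}^2)(Z_{12} - b_0)^2,
\]
equivalently $\Delta_{134}\Delta_{234} > (1-Z_{34}^2)^2 (Z_{12} - b_0)^2$. Since both sides are non-negative it suffices to bound $|Z_{12} - b_0|$ and compare squares, or — cleaner — to factor $\Delta_{134}\Delta_{234} - (1-Z_{34}^2)^2(Z_{12}-b_0)^2$ as a difference of two squares $\big(\sqrt{\Delta_{134}\Delta_{234}} - (1-Z_{34}^2)|Z_{12}-b_0|\big)\big(\sqrt{\Delta_{134}\Delta_{234}} + (1-Z_{34}^2)|Z_{12}-b_0|\big)$ and show the first factor is positive. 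That reduces to an inequality of the form $(1-Z_{34}^2)|Z_{12} - b_0| < \sqrt{\Delta_{134}\Delta_{234}}$, which I would attack by bounding $Z_{12}$ from above and below in terms of the other five variables using the triangle inequalities among the points $\{1,3,4\}$, $\{2,3,4\}$ and the path $1$--$3$--$2$, $1$--$4$--$2$ (these give $Z_{12} \ge Z_{13}Z_{23}$, $Z_{12} \ge Z_{14}Z_{24}$, and $Z_{12} < 1$, and the reverse-triangle estimates), and showing that the resulting extreme values of $(1-Z_{34}^2)(Z_{12}-b_0)$ are dominated by $\sqrt{\Delta_{134}\Delta_{234}}$. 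The main obstacle is exactly here: $b_0$ is a somewhat unwieldy rational expression, so verifying that $Z_{12}$ cannot stray far enough from $b_0$ to overwhelm the positive term requires carefully choosing which triangle inequalities to feed in and in what combination — I expect this to come down to checking the inequality at the boundary of the region cut out by the triangle inequalities (where some $Z_{ij}$ equals a product $Z_{ik}Z_{kj}$, corresponding to a degenerate configuration), and that case analysis, rather than the algebra, is the delicate part of the argument.
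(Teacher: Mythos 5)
Your first two steps coincide with the paper's: the identity $\Delta = -(1-Z_{34}^2)(Z_{12}-b_0)^2 + \Delta_{134}\Delta_{234}/(1-Z_{34}^2)$ obtained by completing the square in $Z_{12}$, and the observation that, $\Delta$ being a concave quadratic in $Z_{12}$, positivity need only be checked at the extreme admissible values of $Z_{12}$, i.e.\ on the boundary where $Z_{12}=Z_{13}Z_{23}$, $Z_{12}=Z_{14}Z_{24}$, or $Z_{12}$ equals one of the ratios $Z_{23}/Z_{13},\dots$ (the paper organizes this through $b_-\le Z_{12}\le b_+$ and its Lemma \ref{lem:b_0}, and uses permutations of the four points to reduce to the two cases $Z_{12}=Z_{13}Z_{23}$ and $Z_{12}=Z_{23}/Z_{13}$). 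Up to that point your plan is sound, and your reduction to endpoints via concavity is even slightly more economical than the paper's, since it does not strictly need $b_-\le b_0\le b_+$.

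The genuine gap is everything after that. You write that the remaining task ``comes down to checking the inequality at the boundary'' and that this case analysis is ``the delicate part'' --- but that delicate part is precisely the mathematical content of the theorem, and you do not supply it. Substituting $Z_{12}=Z_{13}Z_{23}$ does not produce a degenerate or lower-point configuration and is not a finite verification: it leaves a genuine inequality in the five remaining variables, still constrained only by triangle inequalities, and it is not obviously positive (for instance it is not a product of $3$-point determinants). The paper's proof of this boundary case (Lemma \ref{lem:single_constraint}) has to iterate your own trick twice more: it rewrites $\Delta(Z_{12}=Z_{13}Z_{23})$ as a concave quadratic in $Z_{14}$, pushes $Z_{14}$ to its extreme admissible value, and is then forced into the two-constraint configurations of Lemmas \ref{lem:two_constraint_1} and \ref{lem:two_constraint_2}, each of which requires a further endpoint reduction in $Z_{24}$ or $Z_{34}$ and ends with ad hoc algebraic factorizations into manifestly nonnegative pieces, with point permutations used throughout to halve the number of cases. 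None of this is routine bookkeeping, and your proposal gives no indication of how the boundary inequalities would actually be established (your suggested comparison with $\sqrt{\Delta_{134}\Delta_{234}}$ also introduces square roots where the paper's purely polynomial endpoint evaluations are cleaner). As it stands the proposal is a correct reduction plus an accurate diagnosis of where the difficulty lies, but not a proof.
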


We shall prove Theorem \ref{thm:det} in the remainder of this section.

\subsection{Preliminary}

We realize a $4$-point set as $X_4 = \{ 1, 2, 3, 4 \}$. Let $\mathcal{M}_4$ be the set of metrics $d$ on $X_4$. Through the map
$$
d \mapsto (e^{-d(1, 2)}, e^{-d(1, 3)}, e^{-d(1, 4)}
e^{-d(2, 3)}, e^{-d(2, 4)}, e^{-d(3, 4)}),
$$
this set $\mathcal{M}_4$ is bijective to the following subset in the open cube $(0, 1)^6 \subset \R^6$
$$
M_4 = 
\big\{ (Z_{ij})_{1 \le i < j \le 4} \in (0, 1)^6 \big|\ 
Z_{ij} - Z_{ik}Z_{kj} \ge 0 \ (\mbox{$i, j, k$ distinct})
\big\},
$$
where, for convenience, we put $Z_{ji} = Z_{ij}$ for $i < j$. It should be noted that we are considering genuine $4$-point metric spaces, so that the degeneracies, such as $d(1, 2) = 0$, are excluded. The determinant of the zeta matrix of $(X_4, d)$ is identified with the following polynomial function $\Delta$ in $Z = (Z_{ij})_{1 \le i < j \le 4} \in M_4$
\begin{align*}
\Delta 
&=
\begin{array}{|cccc|}
1 & Z_{12} & Z_{13} & Z_{14} \\
Z_{12} & 1 & Z_{23} & Z_{24} \\
Z_{13} & Z_{23} & 1 & Z_{34} \\
Z_{14} & Z_{24} & Z_{34} & 1
\end{array}
\\
&=
1 - Z_{12}^2 - Z_{13}^2 - Z_{14}^2 - Z_{23}^2 - Z_{24}^2 - Z_{34}^2 \\
& \quad
+ 2 Z_{12}Z_{13}Z_{23} + 2 Z_{12}Z_{14}Z_{24} 
+ 2 Z_{13}Z_{14}Z_{34} + 2 Z_{23}Z_{24}Z_{34} \\
& \quad
- 2 Z_{13}Z_{14}Z_{23}Z_{24} 
- 2 Z_{12}Z_{14}Z_{23}Z_{34} 
- 2 Z_{12}Z_{13}Z_{24}Z_{34} \\
&\quad
+ Z_{12}^2Z_{34}^2 + Z_{13}^2Z_{24}^2  + Z_{14}^2Z_{23}^2.
\end{align*}
Then Theorem \ref{thm:det} is equivalent to that $\Delta > 0$ on $M_4$.

\bigskip

We introduce some functions in $Z = (Z_{ij})_{1 \le i < j \le 4} \in M_4$. For any $i, j, k \in X_4$ such that $i < j < k$, we define a polynomial function $\Delta_{ijk}$ on $M_4$ by
$$
\Delta_{ijk}
= 1 - Z_{ij}^2 - Z_{ik}^2 - Z_{jk}^2
+ 2 Z_{ij}Z_{ik}Z_{jk}.
$$
This corresponds to the determinant of the zeta matrix of the $3$-point set $\{ i, j, k \} \subset X_4$ with the induced metric. Hence $\Delta_{ijk} > 0$ on $M_4$. For $Z \in M_4$, we define $b_\pm$ by
\begin{align*}
b_- &= \max \{ Z_{13}Z_{23}, Z_{14}Z_{24} \}, &
b_+ &= 
\min \bigg\{
\frac{Z_{23}}{Z_{13}},
\frac{Z_{13}}{Z_{23}},
\frac{Z_{24}}{Z_{14}},
\frac{Z_{14}}{Z_{24}}
\bigg\}.
\end{align*}
If $Z \in M_4$, then we have
$$
0 < b_- \le Z_{12} \le b_+ \le 1.
$$
For $Z \in M_4$, we also define $b_0$ by
$$
b_0 = 
\frac{Z_{13}Z_{23} + Z_{14}Z_{24} - Z_{14}Z_{23}Z_{34} - Z_{13}Z_{24}Z_{34}}
{1 - Z_{34}^2}.
$$

\begin{lem} \label{lem:b_0}
For $Z \in M_4$, we have $b_- \le b_0 \le b_+$.
\end{lem}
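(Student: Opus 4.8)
The plan is to prove the two inequalities $b_- \le b_0$ and $b_0 \le b_+$ separately, in each case reducing to a statement that follows from the triangle-inequality constraints defining $M_4$. Write $s = Z_{13}Z_{23} + Z_{14}Z_{24}$ for the ``symmetric'' part of the numerator of $b_0$ and $t = Z_{14}Z_{23} + Z_{13}Z_{24}$ for the part multiplied by $-Z_{34}$, so that $b_0 = (s - tZ_{34})/(1-Z_{34}^2)$. Since $1 - Z_{34}^2 > 0$ on $M_4$, the inequality $b_- \le b_0$ is equivalent to $b_-(1-Z_{34}^2) \le s - tZ_{34}$, and similarly $b_0 \le b_+$ is equivalent to $s - tZ_{34} \le b_+(1-Z_{34}^2)$; I would clear this denominator first and work with the resulting polynomial inequalities.

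For the lower bound, recall $b_- = \max\{Z_{13}Z_{23},\, Z_{14}Z_{24}\}$; by symmetry it suffices to show $Z_{13}Z_{23}(1-Z_{34}^2) \le s - tZ_{34}$ (the other case is obtained by swapping the indices $3 \leftrightarrow 4$, which exchanges $Z_{13}Z_{23} \leftrightarrow Z_{14}Z_{24}$ and fixes $Z_{34}$). After subtracting $Z_{13}Z_{23}$ from both sides and dividing by $Z_{34} > 0$, this rearranges to
\[
Z_{13}Z_{23}Z_{34} + (Z_{14}Z_{23} + Z_{13}Z_{24}) \ge Z_{14}Z_{24}\cdot\frac{1}{Z_{34}} + Z_{13}Z_{23}Z_{34},
\]
so the real content is $Z_{14}Z_{23} + Z_{13}Z_{24} \ge Z_{14}Z_{24}/Z_{34} + (\text{cancellations})$; the clean way to see it is to use $Z_{23} \ge Z_{24}Z_{34}$ and $Z_{13} \le Z_{14}/Z_{34}$ — wait, one wants $Z_{14}Z_{23} \ge Z_{14}Z_{24}Z_{34}$ and $Z_{13}Z_{24}\cdot(1) \ge \cdots$ — more carefully, I expect the decomposition to be: $Z_{14}Z_{23} - Z_{14}Z_{24}Z_{34} = Z_{14}(Z_{23} - Z_{24}Z_{34}) \ge 0$ and $Z_{13}Z_{24} - Z_{13}Z_{23}Z_{34} = Z_{13}(Z_{24} - Z_{23}Z_{34}) \ge 0$, each factor being a triangle-inequality term $Z_{ij} - Z_{ik}Z_{kj} \ge 0$ from the definition of $M_4$. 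Summing these two gives exactly $s - Z_{13}Z_{23} - tZ_{34} + Z_{13}Z_{23}Z_{34}^2 \ge 0$, which is $Z_{13}Z_{23}(1-Z_{34}^2) \le s - tZ_{34}$ after regrouping. I would present this as: $(s - tZ_{34}) - Z_{13}Z_{23}(1-Z_{34}^2) = Z_{14}(Z_{23}-Z_{24}Z_{34}) + Z_{13}(Z_{24}-Z_{23}Z_{34}) \ge 0$.

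For the upper bound, $b_+ = \min\{Z_{23}/Z_{13},\, Z_{13}/Z_{23},\, Z_{24}/Z_{14},\, Z_{14}/Z_{24}\}$, so it suffices to show $s - tZ_{34} \le \beta(1-Z_{34}^2)$ for each of the four candidate values $\beta$. By the symmetries $1\leftrightarrow 2$ and $3 \leftrightarrow 4$ it is enough to treat, say, $\beta = Z_{23}/Z_{13}$. Multiplying through by $Z_{13} > 0$, the inequality becomes $Z_{13}(s - tZ_{34}) \le Z_{23}(1-Z_{34}^2)$; I would expand $Z_{13}s = Z_{13}^2 Z_{23} + Z_{13}Z_{14}Z_{24}$ and $Z_{13}tZ_{34} = Z_{13}Z_{14}Z_{23}Z_{34} + Z_{13}^2 Z_{24}Z_{34}$ and look for a decomposition of $Z_{23}(1-Z_{34}^2) - Z_{13}(s-tZ_{34})$ into a nonnegative combination of the factors $1 - Z_{ij} > 0$, the triangle terms $Z_{ij} - Z_{ik}Z_{kj} \ge 0$, and products thereof — this is the analogue of the telescoping identity quoted for $\det\zeta_{X_3}$ in the introduction. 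I expect this regrouping to be the main obstacle: unlike the lower bound, which splits cleanly into two triangle terms, the upper bound seems to require combining a $(1-Z_{ij})$-type factor with a triangle term (the grouping $Z_{23}(1-Z_{13}Z_{13}) + Z_{23}Z_{13}(Z_{13} - Z_{14}\cdot\text{something})$ type of manipulation), and getting the bookkeeping exactly right — including checking that every coefficient is manifestly $\ge 0$ on $M_4$ — is where the care is needed. Once one of the four cases is done explicitly, the remaining three follow by the index symmetries, so I would only write out one in full.
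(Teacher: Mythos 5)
Your plan for the lower bound is essentially the paper's argument, but the identity you display is wrong. Clearing the denominator, the quantity you must show nonnegative is
\[
(s - tZ_{34}) - Z_{13}Z_{23}(1-Z_{34}^2)
= (Z_{14}-Z_{13}Z_{34})(Z_{24}-Z_{23}Z_{34}),
\]
a single \emph{product} of two triangle terms, not the sum $Z_{14}(Z_{23}-Z_{24}Z_{34}) + Z_{13}(Z_{24}-Z_{23}Z_{34})$ you wrote: that sum expands to $t - sZ_{34}$, which is a different quantity altogether. The conclusion $b_0 \ge Z_{13}Z_{23}$ (and, by the $3\leftrightarrow 4$ swap, $b_0 \ge Z_{14}Z_{24}$) still follows once the factorization is corrected, so this half is a fixable slip rather than a conceptual problem; it is exactly the paper's pair of rewritings of $b_0$.

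The genuine gap is the upper bound $b_0 \le b_+$, which is the substantive half of the lemma. After the (correct) symmetry reduction to $\beta = Z_{23}/Z_{13}$ and clearing denominators, the quantity to be shown nonnegative is
\[
Z_{23}(1-Z_{13}^2)(1-Z_{34}^2) - Z_{13}(Z_{14}-Z_{13}Z_{34})(Z_{24}-Z_{23}Z_{34}),
\]
a \emph{difference} of a positive term and a nonnegative term; it is not manifestly a nonnegative combination of $(1-Z_{ij})$'s and triangle terms, and you only say you would ``look for'' such a decomposition while conceding this is where the difficulty lies. That is not a proof, and no analogue of the telescoping identity for $\det\zeta_{X_3}$ is exhibited. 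The paper instead exploits that this expression is linear and \emph{decreasing} in $Z_{24}$, enlarges $M_4$ to a set $\widetilde{M}_4$ cut out by only four of the triangle inequalities (so that replacing $Z_{24}$ by its maximal value $\tilde b_+ = \min\{Z_{23}/Z_{34},\, Z_{34}/Z_{23}\}$ does not leave the domain), and verifies nonnegativity at the two boundary values $Z_{24} = Z_{23}/Z_{34}$ and $Z_{24} = Z_{34}/Z_{23}$, using $Z_{34} \ge Z_{13}Z_{14}$ in the first case and additionally $Z_{23} > Z_{34}$ in the second. Some argument of this kind (a monotonicity-plus-boundary analysis, with attention to which constraints survive the substitution), or else an explicitly verified decomposition, is needed; as it stands your proposal does not establish $b_0 \le b_+$.
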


\begin{proof}
The inequality $b_- \le b_0$ follows from the following expressions of $b_0$
\begin{align*}
b_0 
&= 
Z_{13}Z_{23} + 
\frac{(Z_{14} - Z_{13}Z_{34})(Z_{24} - Z_{23}Z_{34})}{1 - Z_{34}^2} \\
&=
Z_{14}Z_{24} +
\frac{(Z_{13} - Z_{14}Z_{34})(Z_{23}- Z_{24}Z_{34})}{1 - Z_{34}^2}.
\end{align*}
To show $b_0 \le b_+$, we note that the following four cases can occur:
\begin{align*}
b_+ &= \frac{Z_{23}}{Z_{13}}, &
b_+ &= \frac{Z_{13}}{Z_{23}}, &
b_+ &= \frac{Z_{24}}{Z_{14}}, &
b_+ &= \frac{Z_{14}}{Z_{24}}.
\end{align*}
These cases are equivalent by permutations of points on $X_4$. For instance, the case $b_+ = Z_{23}/Z_{13}$ is transformed into the case $b_+ = Z_{13}/Z_{13}$ by the permutation $(12)$ exchanging the points $1$ and $2$. Similarly, the case $b_+ = Z_{23}/Z_{13}$ is transformed to the cases $b_+ = Z_{24}/Z_{14}$ and $b_+ = Z_{14}/Z_{24}$ by the permutations $(34)$ and $(12)(34)$, respectively. These permutations leave the function $b_0$ on $M_4$ invariant. Thus, it suffices to prove that $b_0 \le Z_{23}/Z_{13}$ for any $Z \in M_4$. 

Now, we make use of the following formula valid for all $Z \in M_4$
$$
\frac{Z_{23}}{Z_{13}} - b_0
=
\frac{
Z_{23}(1 - Z_{13}^2)(1 - Z_{34}^2)
- Z_{13}(Z_{14} - Z_{13}Z_{34})(Z_{24} - Z_{23}Z_{34})
}{Z_{13}(1 - Z_{34}^2)}.
$$
This is linear as a function in $Z_{24}$. If $Z \in M_4$, then $Z_{24}$ is subject to 
$$
Z_{24} \le \min\bigg\{ \frac{ Z_{23}}{Z_{34}}, \frac{Z_{34}}{Z_{23}} \bigg\}.
$$ 
We put $\tilde{b}_+ = \min\{ Z_{23}/Z_{34}, Z_{34}/Z_{23} \}$. With these preliminaries, we shall show that the function $f = Z_{23}/Z_{13} - b_0$ in $Z$ is non-negative on the following subset of the open cube $(0, 1)^6$
$$
\widetilde{M}_4 = \bigg\{ (Z_{ij})_{1 \le i < j \le 4} \in (0, 1)^6 \bigg|\
\begin{array}{l}
Z_{14} \ge Z_{13}Z_{34}, \ Z_{34} \ge Z_{13}Z_{14}, \\
Z_{23} \ge Z_{24}Z_{34}, \ Z_{34} \ge Z_{23}Z_{24}
\end{array}
\bigg\},
$$
which contains $M_4$ as a subset. If $Z \in \widetilde{M}_4$, then $Z_{24} \le \tilde{b}_+$. If $Z \in \widetilde{M}_4$ satisfies $Z_{24} = Z_{23}/Z_{34}$, then the value of $f$ at this $Z$ is
$$
f({ \scriptstyle Z_{24} = \frac{Z_{23}}{Z_{34}} })
=
\frac{Z_{23}(Z_{34} - Z_{13}Z_{14})}{Z_{13}Z_{34}} \ge 0.
$$
If $Z \in \widetilde{M}_4$ satisfies $Z_{24} = Z_{34}/Z_{23}$, then the value of $f$ at this $Z$ is
$$
f({\scriptstyle Z_{24} = \frac{Z_{34}}{Z_{23}} })
=
\frac{Z_{23}(Z_{34} - Z_{13}Z_{14})}{Z_{13}Z_{34}}
+
\frac{(Z_{23}^2 - Z_{34}^2)(Z_{14} - Z_{13}Z_{34})}
{Z_{23}Z_{34}(1 - Z_{34}^2)}.
$$
This is non-negative, because $1 > Z_{24} = Z_{34}/Z_{23}$ implies $Z_{23} > Z_{34}$. It follows that if $Z \in \widetilde{M}_4$ satisfies $Z_{24} = \tilde{b}_+$, then the value of $f$ at this $Z$ is non-negative. On $\widetilde{M}_4$, the function $f$ is decreasing in $Z_{24}$. Thus, for any $Z \in \widetilde{M}_4$, we have 
$$
Z' = (Z_{12}, Z_{13}, Z_{14}, Z_{23}, \tilde{b}_+, Z_{34})
\in \widetilde{M}_4,
$$
for which $f(Z) \ge f(Z') \ge 0$. Hence $f = Z_{23}/Z_{13} - b_0 \ge 0$ on $M_4 \subset \widetilde{M}_4$.
\end{proof}

\subsection{Proof}

We now prove that $\Delta > 0$ on $M_4$. The polynomial function $\Delta$ is quadratic in the variable $Z_{12}$. Completing the square, one has the expression
$$
\Delta = - (1 - Z_{34}^2)(Z_{12} - b_0)^2 +
\frac{\Delta_{134}\Delta_{234}}{1 - Z_{34}^2}.
$$
Our method of proof is a case-by-case estimate of $\Delta$ by using the expression above.  For $Z \in M_4$, the following two cases can occur:
\begin{itemize}
\item[($L$)]
$b_- \le Z_{12} \le b_0$,

\item[($R$)]
$b_0 \le Z_{12} \le b_+$.

\end{itemize}
The case ($L$) is further divided into two cases:
\begin{itemize}
\item[($L_1$)]
$b_- = Z_{13}Z_{23} \le Z_{12} \le b_0$,

\item[($L_2$)]
$b_- = Z_{14}Z_{24} \le Z_{12} \le b_0$,

\end{itemize}
and the case ($R$) is divided into four cases:
\begin{itemize}
\item[($R_1$)]
$b_0 \le Z_{12} \le b_+ = Z_{23}/Z_{13}$,

\item[($R_2$)]
$b_0 \le Z_{12} \le b_+ = Z_{13}/Z_{23}$,

\item[($R_3$)]
$b_0 \le Z_{12} \le b_+ = Z_{24}/Z_{14}$,

\item[($R_4$)]
$b_0 \le Z_{12} \le b_+ = Z_{14}/Z_{24}$.

\end{itemize}
The determinant of the zeta matrix is clearly invariant under the permutations of the points on $X_4$. Thus, for example, $\Delta > 0$ in the case ($L_1$) and that in ($L_2$) are equivalent, by the permutation $(34)$ exchanging the points $3$ and $4$ in $X_4$. As a result, it suffices to prove the claims $\Delta > 0$ in the cases ($L_1$)  and ($R_1$) only. These positivity claims are respectively shown in Proposition \ref{prop:L} and Proposition \ref{prop:R} below, by which the proof of Theorem \ref{thm:det} will be completed.

\bigskip

To show Proposition \ref{prop:L} and Proposition \ref{prop:R}, we prepare for lemmas.

\begin{lem} \label{lem:two_constraint_1}
If $Z = (Z_{ij}) \in M_4$ satisfies $Z_{14} = \frac{Z_{34}}{Z_{13}}$ and $Z_{23} = \frac{Z_{12}}{Z_{13}}$, then the value of $\Delta$ at this $Z$ is positive: 
$$
\Delta({\scriptstyle Z_{14} = \frac{Z_{34}}{Z_{13}}, Z_{23} = \frac{Z_{12}}{Z_{13}} })
> 0
$$
\end{lem}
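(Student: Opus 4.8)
The plan is to substitute the two relations into $\Delta$, collapse it to a single negative square plus a manifestly positive term by completing the square, and then estimate the square using the remaining triangle inequalities.

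First I would regard $\Delta$ as a quadratic polynomial in $Z_{24}$: its leading coefficient is $-(1 - Z_{13}^2)$, and once $Z_{23} = Z_{12}/Z_{13}$ is imposed its linear coefficient simplifies to $2 Z_{12} Z_{34}(1 - Z_{13}^2)/Z_{13}$. Completing the square in $Z_{24}$ and simplifying the constant term using \emph{both} relations $Z_{14} = Z_{34}/Z_{13}$ and $Z_{23} = Z_{12}/Z_{13}$, I expect to arrive at the clean identity
$$
\Delta({\scriptstyle Z_{14} = \frac{Z_{34}}{Z_{13}}, Z_{23} = \frac{Z_{12}}{Z_{13}} })
= (1 - Z_{13}^2)\Big[(1 - Z_{14}^2)(1 - Z_{23}^2) - \big(Z_{24} - Z_{12}Z_{14}\big)^2\Big],
$$
where the cross term has been rewritten using $Z_{12}Z_{34}/Z_{13} = Z_{12}Z_{14} = Z_{23}Z_{34}$. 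Checking that the leftover constant-after-square collapses to the symmetric product $(1 - Z_{13}^2)(1 - Z_{14}^2)(1 - Z_{23}^2)$ on the constraint locus is the one genuinely computational point; I would verify it once on a numerical example.

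Since $Z_{13} \in (0,1)$, it then suffices to prove $(Z_{24} - Z_{12}Z_{14})^2 < (1 - Z_{14}^2)(1 - Z_{23}^2)$. The triangle inequality $Z_{24} \ge Z_{12}Z_{14}$ makes the left-hand side the square of a nonnegative number, hence increasing in $Z_{24}$, so it is enough to bound $Z_{24}$ from above by a suitable triangle inequality. Here I would split according to whether $Z_{12} \le Z_{34}$ or $Z_{12} \ge Z_{34}$ (equivalently, which of the bounds $Z_{24} \le Z_{12}/Z_{14}$ and $Z_{24} \le Z_{34}/Z_{23}$ is the effective one). In the case $Z_{12} \le Z_{34}$, using $Z_{24} \le Z_{12}/Z_{14}$ gives $Z_{24} - Z_{12}Z_{14} \le Z_{12}(1 - Z_{14}^2)/Z_{14}$; after substituting, clearing denominators, and writing $1 - Z_{23}^2 = (Z_{13}^2 - Z_{12}^2)/Z_{13}^2$, it suffices (using $Z_{12}^2 \le Z_{13}^2 Z_{14}^2$) to check $(Z_{13}^{-2} - 1)(Z_{14}^{-2} - 1) > 0$, which is clear since $Z_{13}, Z_{14} \in (0,1)$. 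In the case $Z_{12} \ge Z_{34}$, the symmetric computation starting from $Z_{24} - Z_{23}Z_{34} \le Z_{34}(1 - Z_{23}^2)/Z_{23}$ reduces, using $Z_{34} \le Z_{12}$, to the inequality $Z_{12} < Z_{13}$, which holds because $Z_{23} = Z_{12}/Z_{13} < 1$. All the reductions preserve strictness, so $\Delta > 0$.

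The main obstacle is the algebra of the first step: correctly completing the square in $Z_{24}$ and recognizing that, on the locus cut out by the two constraints, the leftover constant factors as the symmetric product above. Once that factored form is in hand, the remainder is a routine two-case estimate, each case bottoming out in an obvious inequality.
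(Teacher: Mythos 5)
Your proposal is correct and follows essentially the same route as the paper: your completed-square identity is exactly the paper's expression $(1-Z_{13}^2)\bigl(-(Z_{24}-\tfrac{Z_{12}Z_{34}}{Z_{13}})^2+\tfrac{(Z_{13}^2-Z_{12}^2)(Z_{13}^2-Z_{34}^2)}{Z_{13}^4}\bigr)$ rewritten via the constraints (so it does hold identically, not just numerically), and your two cases $Z_{12}\le Z_{34}$ and $Z_{12}\ge Z_{34}$ coincide with the paper's endpoint cases $Z_{24}=\min\{Z_{12}Z_{13}/Z_{34},\,Z_{13}Z_{34}/Z_{12}\}$. The only differences are cosmetic: the paper dispatches the second case by the $(13)(24)$ symmetry and finishes with an explicit factorization, while you treat both cases directly and reduce each to $(Z_{13}^{-2}-1)(Z_{14}^{-2}-1)>0$ (resp.\ with $Z_{23}$), which is equivalent.
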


\begin{proof}
Let $M'_4 \subset M_4$ be the subset
$$
M'_4 = \{ Z \in M_4 |\ Z_{13} Z_{14} = Z_{34}, Z_{13}Z_{23} = Z_{12} \}.
$$
The value of $\Delta$ at $Z \in M'_4$ can be expressed as
$$
\Delta({\scriptstyle Z_{14} = \frac{Z_{34}}{Z_{13}}, Z_{23} = \frac{Z_{12}}{Z_{13}}})
=
(1-Z_{13}^2) \bigg(
-\bigg( Z_{24} - \frac{Z_{12}Z_{34}}{Z_{13}} \bigg)^2
+\frac{(Z_{13}^2 - Z_{12}^2)(Z_{13}^2 - Z_{34}^2)}{Z_{13}^4}
\bigg).
$$
For $Z \in M'_4$, we let $b'_-$ and $b'_+$ be given by 
\begin{align*}
b'_- &= \max\{ Z_{12}Z_{14}, Z_{23}Z_{34} \}
= \frac{Z_{12}Z_{34}}{Z_{13}}, \\
b'_+ &=
\min\bigg\{ 
\frac{Z_{14}}{Z_{12}}, \frac{Z_{12}}{Z_{14}}, 
\frac{Z_{34}}{Z_{23}}, \frac{Z_{23}}{Z_{34}} \bigg\}
=
\min\bigg\{ 
\frac{Z_{34}}{Z_{12}Z_{13}}, \frac{Z_{12}Z_{13}}{Z_{34}},
\frac{Z_{13}Z_{34}}{Z_{12}}, \frac{Z_{12}}{Z_{13}Z_{34}} \bigg\} \\
&=
\min\bigg\{ 
\frac{Z_{12}Z_{13}}{Z_{34}},
\frac{Z_{13}Z_{34}}{Z_{12}} \bigg\}.
\end{align*}
If $Z \in M'_4$, then $b'_- \le Z_{24} \le b'_+$. Hence $\Delta({\scriptstyle Z_{14} = \frac{Z_{34}}{Z_{13}}, Z_{23} = \frac{Z_{12}}{Z_{13}}})$ is decreasing as a function in $Z_{24}$. As a result, for any $Z \in M'_4$, we have an element
$$
Z' = (Z_{12}, Z_{13}, Z_{14}, Z_{23}, b'_+, Z_{34}) \in M'_4,
$$
for which we have
$$
\Delta({\scriptstyle Z_{14} = \frac{Z_{34}}{Z_{13}}, Z_{23} = \frac{Z_{12}}{Z_{13}}})
= 
\Delta(Z)
\ge
\Delta(Z')
=
\Delta({\scriptstyle Z_{14} = \frac{Z_{34}}{Z_{13}}, Z_{23} = \frac{Z_{12}}{Z_{13}}, Z_{24} = b'_+}).
$$
Now, the proof will be completed by showing:
\begin{itemize}
\item[(a)]
In the case that $b'_+ = \frac{Z_{12}Z_{13}}{Z_{34}}$, we have $\Delta({\scriptstyle Z_{14} = \frac{Z_{34}}{Z_{13}}, Z_{23} = \frac{Z_{12}}{Z_{13}}, Z_{24} = b'_+}) > 0$.

\item[(b)]
In the case that $b'_+ = \frac{Z_{13}Z_{34}}{Z_{12}}$, we have $\Delta({\scriptstyle Z_{14} = \frac{Z_{34}}{Z_{13}}, Z_{23} = \frac{Z_{12}}{Z_{13}}, Z_{24} = b'_+}) > 0$.

\end{itemize}
These two cases turn out to be equivalent, because the permutation $(13)(24)$ on $X_4$ induces the following transformation on $M'_4 \subset M_4$
$$
(Z_{12}, Z_{13}, Z_{14}, Z_{23}, Z_{24}, Z_{34})
\leftrightarrow
(Z_{34}, Z_{13}, Z_{23}, Z_{14}, Z_{24}, Z_{12}).
$$
Hence it suffices to prove only (a): In the case that $b'_+ = Z_{12}Z_{13}/Z_{34}$, we have
\begin{multline*}
\Delta({\scriptstyle Z_{14} = \frac{Z_{34}}{Z_{13}}, Z_{23} = \frac{Z_{12}}{Z_{13}}, Z_{24} = \frac{Z_{12}Z_{13}}{Z_{34}} }) \\
=
\frac{(1 - Z_{13}^2)(Z_{13}^2 - Z_{34}^2)}{Z_{13}^4 Z_{34}^2}
(Z_{34}^2(Z_{13}^2 - Z_{12}^2) - Z_{12}^2 Z_{13}^2(Z_{13}^2 - Z_{34}^2)).
\end{multline*}
From $Z_{34}/Z_{13} = Z_{14} < 1$ and $Z_{12}/Z_{13} = Z_{23} < 1$, we respectively get 
\begin{align*}
Z_{34} &< Z_{13}, &
Z_{12} &< Z_{13}.
\end{align*}
From $Z_{12}Z_{13}/Z_{34} = b'_+ \le Z_{13}Z_{34}/Z_{12}$, we get $Z_{12}^2 \le Z_{34}^2$. Using the expression 
\begin{multline*}
Z_{34}^2(Z_{13}^2 - Z_{12}^2) - Z_{12}^2 Z_{13}^2(Z_{13}^2 - Z_{34}^2) \\
=
(Z_{34}^2 - Z_{12}^2)(Z_{13}^2 - Z_{12}^2 + Z_{12}^2Z_{13}^2)
+ Z_{12}^2 (1 - Z_{13}^2) (Z_{13}^2 - Z_{12}^2),
\end{multline*}
we find that $\Delta({\scriptstyle Z_{14} = \frac{Z_{34}}{Z_{13}}, Z_{23} = \frac{Z_{12}}{Z_{13}}, Z_{24} = \frac{Z_{12}Z_{13}}{Z_{34}} }) > 0$.
\end{proof}

\begin{lem} \label{lem:two_constraint_2}
If $Z = (Z_{ij}) \in M_4$ satisfies $Z_{13} = \frac{Z_{12}}{Z_{23}}$ and $Z_{14} = \frac{Z_{12}}{Z_{24}}$, then the value of $\Delta$ at this $Z$ is positive: 
$$
\Delta({\scriptstyle Z_{13} = \frac{Z_{12}}{Z_{23}}, Z_{14} = \frac{Z_{12}}{Z_{24}}}) > 0.
$$
\end{lem}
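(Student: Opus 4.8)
The plan is to mimic the structure of the proof of Lemma \ref{lem:two_constraint_1}: substitute the two constraints $Z_{13} = Z_{12}/Z_{23}$ and $Z_{14} = Z_{12}/Z_{24}$ into the completed-square expression for $\Delta$, identify the resulting function as quadratic (and concave) in one remaining free variable, and then reduce to the boundary of the admissible region, where the constraints from $M_4$ force $\Delta$ to be manifestly positive. First I would observe that on the slice $Z_{13}=Z_{12}/Z_{23}$, $Z_{14}=Z_{12}/Z_{24}$ the $3$-point determinants $\Delta_{134}$ and $\Delta_{234}$ simplify considerably; in particular one expects a common factor to appear (analogous to the factor $1-Z_{13}^2$ that was pulled out in Lemma \ref{lem:two_constraint_1}), leaving $\Delta$ as a concave quadratic in $Z_{34}$ of the form $\Delta = c\,\big(-(Z_{34}-\beta)^2 + \gamma\big)$ with $c>0$. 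This concavity is the reason the argument works: a concave function on an interval attains its minimum at an endpoint, so it suffices to check positivity at the two extreme admissible values of $Z_{34}$.

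Next I would pin down those extreme values. With $Z_{13},Z_{14}$ fixed by the constraints, the surviving triangle inequalities $Z_{34}\ge Z_{13}Z_{14}$, $Z_{34}\ge$ (products through $1$), $Z_{34}\le Z_{13}/Z_{14}$, $Z_{34}\le Z_{14}/Z_{13}$, etc., cut out an interval $b''_-\le Z_{34}\le b''_+$ for $Z_{34}$, where $b''_\pm$ are minima/maxima of explicit ratios and products of $Z_{12},Z_{23},Z_{24}$. As in the previous lemma, several of the cases for $b''_+$ (and $b''_-$) will be interchanged by symmetries of $X_4$ that preserve the slice — here the relevant permutations are $(34)$ and the transposition $(12)$ composed appropriately — so it should be enough to evaluate $\Delta$ at one representative endpoint value of $Z_{34}$ in each of a small number of cases. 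At such an endpoint the expression for $\Delta$ collapses to a product of factors each of which is visibly non-negative on $M_4$ (using $Z_{ij}<1$ and the triangle-type inequalities), possibly after rewriting a difference-of-squares term the way $Z_{34}^2(Z_{13}^2-Z_{12}^2) - Z_{12}^2Z_{13}^2(Z_{13}^2-Z_{34}^2)$ was split in the proof of Lemma \ref{lem:two_constraint_1}.

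The main obstacle I anticipate is the bookkeeping of the endpoint cases and verifying that the relevant symmetries really do act on the constrained slice $\{Z_{13}=Z_{12}/Z_{23},\ Z_{14}=Z_{12}/Z_{24}\}$ and exchange the cases as claimed — unlike the single permutation $(13)(24)$ used in Lemma \ref{lem:two_constraint_1}, here the constraints are not symmetric in an obvious way, so one must check carefully which permutations preserve the pair of equations (the transposition $(34)$ swaps $Z_{13}\leftrightarrow Z_{14}$ and $Z_{23}\leftrightarrow Z_{24}$, which does preserve the slice, but e.g. $(12)$ does not, so some cases may genuinely need separate treatment). The other delicate point is confirming the sign of the leading coefficient $c$ and that the interval $[b''_-,b''_+]$ is nonempty and contained in $(0,1)$ for $Z\in M_4$; this follows the same pattern as the inequalities $0<b_-\le Z_{12}\le b_+\le 1$ recorded before Lemma \ref{lem:b_0}, but has to be redone for this slice. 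Once the quadratic is shown to be concave in $Z_{34}$ with non-negative — in fact positive, since degeneracies are excluded — values at both ends of the admissible interval, positivity of $\Delta$ on the whole slice follows immediately, completing the proof.
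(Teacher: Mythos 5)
Your plan is essentially the paper's own proof: after imposing $Z_{13}=Z_{12}/Z_{23}$, $Z_{14}=Z_{12}/Z_{24}$, the paper writes $\Delta=-(1-Z_{12}^2)(Z_{34}-c_0)^2+\Delta_{123}\Delta_{124}/(1-Z_{12}^2)$, a concave quadratic in $Z_{34}$, reduces to the admissible endpoints $c_-=\max\{Z_{23}Z_{24},\,Z_{12}^2/(Z_{23}Z_{24})\}$ and $c_+=\min\{Z_{24}/Z_{23},\,Z_{23}/Z_{24}\}$, uses the permutation $(34)$ to merge the two $c_+$ cases, and verifies positivity there by exactly the kind of explicit factorizations and difference-of-squares splittings you defer to the end. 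One small correction to your side remark: the transposition $(12)$ does preserve the constrained slice (the constraints read $Z_{12}=Z_{13}Z_{23}$ and $Z_{12}=Z_{14}Z_{24}$, which are symmetric in the points $1$ and $2$), and it in fact exchanges the two candidates for the lower endpoint, so it could be used to merge those cases as well, although the paper, as you anticipate, simply treats them separately.
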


\begin{proof}
To begin with, we notice that the inequality
\begin{align*}
Z_{12} &< \min\{ Z_{13}, Z_{23}, Z_{14}, Z_{24} \}
\end{align*}
holds true if $Z \in M_4$ is subject to $Z_{13} = Z_{12}/Z_{23}$ and $Z_{14} = Z_{12}/Z_{24}$. We prove the present lemma by using the following expression valid for general $Z \in M_4$
$$
\Delta = - (1 - Z_{12}^2)(Z_{34} - c_0)^2 
+ \frac{\Delta_{123}\Delta_{124}}{1-Z_{12}^2},
$$
where $c_0$ is defined by
\begin{align*}
c_0 &= 
\frac{Z_{13}Z_{14} + Z_{23}Z_{24} - Z_{12}Z_{13}Z_{24} - Z_{12}Z_{14}Z_{23}}
{1 - Z_{12}^2}.
\end{align*}
We define $c_\pm$ by
\begin{align*}
c_- &= \max \{ Z_{13}Z_{14}, Z_{23}Z_{24} \}, &
c_+ &= \min \bigg\{
\frac{Z_{14}}{Z_{13}},
\frac{Z_{13}}{Z_{14}},
\frac{Z_{24}}{Z_{23}},
\frac{Z_{23}}{Z_{24}}
\bigg\}.
\end{align*}
If $Z \in M_4$, then $c_- \le Z_{34} \le c_+$. If $Z \in M_4$ satisfies $Z_{13} = Z_{12}/Z_{23}$ and $Z_{14} = Z_{12}/Z_{24}$, then we get
\begin{align*}
c_- &=  \max \bigg\{ \frac{Z_{12}^2}{Z_{23}Z_{24}}, Z_{23}Z_{24} \bigg\}, &
c_+ &= \min \bigg\{ \frac{Z_{24}}{Z_{23}}, \frac{Z_{23}}{Z_{24}} \bigg\}.
\end{align*}
Since $\Delta$ is quadratic in $Z_{34}$, if $Z \in M_4$ satisfies $c_0 \le Z_{34} \le c_+$, then we have
$$
Z' = (Z_{12}, Z_{13}, Z_{14}, Z_{23}, Z_{24}, c_+) \in M_4,
$$
for which $\Delta = \Delta(Z) \ge \Delta(Z') =  \Delta({\scriptstyle Z_{34} = c_+})$. Also, If $Z \in M_4$ satisfies $c_- \le Z_{34} \le c_0$, then $\Delta \ge \Delta({\scriptstyle Z_{34} = c_-})$. Imposing the constraints, we have:
\begin{itemize}
\item[(a)]
If $Z \in M_4$ satisfies $c_0 \le Z_{34} \le c_+$, $Z_{13} = Z_{12}/Z_{23}$ and $Z_{14} = Z_{12}/Z_{24}$, then
$$
\Delta({\scriptstyle Z_{13} = \frac{Z_{12}}{Z_{23}}, Z_{14} = \frac{Z_{12}}{Z_{24}}}) 
\ge
\Delta({\scriptstyle Z_{13} = \frac{Z_{12}}{Z_{23}}, Z_{14} = \frac{Z_{12}}{Z_{24}}, Z_{34} = c_+ }).
$$

\item[(b)]
If $Z \in M_4$ satisfies $c_- \le Z_{34} \le c_0$, $Z_{13} = Z_{12}/Z_{23}$ and $Z_{14} = Z_{12}/Z_{24}$, then
$$
\Delta({\scriptstyle Z_{13} = \frac{Z_{12}}{Z_{23}}, Z_{14} = \frac{Z_{12}}{Z_{24}}}) 
\ge
\Delta({\scriptstyle Z_{13} = \frac{Z_{12}}{Z_{23}}, Z_{14} = \frac{Z_{12}}{Z_{24}}, Z_{34} = c_- }).
$$

\end{itemize}
The proof will be completed by showing the positivity of the last terms.

\smallskip

(a) Suppose that $Z \in M_4$ satisfies  $c_0 \le Z_{34} \le c_+$, $Z_{13} = Z_{12}/Z_{23}$ and $Z_{14} = Z_{12}/Z_{24}$. Then there are two cases: $c_+ = Z_{24}/Z_{23}$ and $c_+ = Z_{23}/Z_{24}$. These two cases are equivalent by the permutation $(34)$ of the points $3$ and $4$ in $X_4$. Hence it suffices to study the case that $c_+ = Z_{24}/Z_{23}$. Then, we have
\begin{align*}
\Delta({\scriptstyle Z_{13} = \frac{Z_{12}}{Z_{23}}, Z_{14} = \frac{Z_{12}}{Z_{24}}, Z_{34} = c_+}) 
&=
\Delta({\scriptstyle Z_{13} = \frac{Z_{12}}{Z_{23}}, Z_{14} = \frac{Z_{12}}{Z_{24}}, Z_{34} = \frac{Z_{24}}{Z_{23}}}) \\
&=
\frac{(1 - Z_{23}^2)(Z_{24}^2 - Z_{12}^2)(Z_{23}^2 - Z_{24}^2)}
{Z_{23}^2Z_{24}^2} > 0,
\end{align*}
because $Z_{12}/Z_{24} = Z_{14} < 1$ and $Z_{24}/Z_{23} = Z_{34} < 1$.

(b) Suppose that $Z \in M_4$ satisfies  $c_- \le Z_{34} \le c_0$, $Z_{13} = Z_{12}/Z_{23}$ and $Z_{14} = Z_{12}/Z_{24}$. In the case that $c_- = Z_{23}Z_{24}$, we have
\begin{multline*}
\Delta({\scriptstyle Z_{13} = \frac{Z_{12}}{Z_{23}}, Z_{14} = \frac{Z_{12}}{Z_{24}}, Z_{34} = Z_{23}Z_{24}})
\\
=
\frac{(1 - Z_{23}^2)(1 - Z_{24}^2)}{Z_{23}^2Z_{24}^2}
\bigg(
Z_{23}^2 Z_{24}^2 - Z_{12}^2 Z_{23}^2 - Z_{12}^2 Z_{24}^2 
+ Z_{12}^2 Z_{23}^2 Z_{24}^2
\bigg).
\end{multline*}
From $c_- = Z_{23}Z_{24}$, one has $Z_{12} \le Z_{23}Z_{24}$. By the expression
\begin{multline*}
Z_{23}^2 Z_{24}^2 - Z_{12}^2 Z_{23}^3 - Z_{12}^2 Z_{24}^2 
+ Z_{12}^2 Z_{23}^2 Z_{24}^2 \\
=
(1 - (1- Z_{23}^2)(1- Z_{24}^2))(Z_{23}^2Z_{24}^2 - Z_{12}^2)
+ Z_{23}^2Z_{24}^2(1 - Z_{23}^2)(1 - Z_{24}^2),
\end{multline*}
we see $\Delta({\scriptstyle Z_{13} = \frac{Z_{12}}{Z_{23}}, Z_{14} = \frac{Z_{12}}{Z_{24}}, Z_{34} = Z_{23}Z_{24}}) > 0$. In the case that $c_- = Z_{12}^2/(Z_{23}Z_{24})$, we have $Z_{12} \ge Z_{23}Z_{24}$ from $Z_{12}^2/(Z_{23}Z_{24}) = c_- \ge Z_{23}Z_{24}$, and 
\begin{align*}
\Delta({\scriptstyle Z_{13} = \frac{Z_{12}}{Z_{23}}, Z_{14} = \frac{Z_{12}}{Z_{24}}, Z_{34} = \frac{Z_{12}^2}{Z_{23}Z_{24}} }) 
&=
\frac{
(Z_{23}^2 - Z_{12}^2)
(Z_{24}^2 - Z_{12}^2)
(1 - Z_{23}^2 - Z_{24}^2 + Z_{12}^2)
}
{Z_{23}^2Z_{24}^2} 
\\
&\ge
\frac{
(Z_{23}^2 - Z_{12}^2)
(Z_{24}^2 - Z_{12}^2)
(1 - Z_{23}^2 - Z_{24}^2 + Z_{23}^2Z_{24}^2)
}
{Z_{23}^2Z_{24}^2} 
\\
&=
\frac{
(Z_{23}^2 - Z_{12}^2)
(Z_{24}^2 - Z_{12}^2)
(1 - Z_{23}^2)(1 - Z_{24}^2)
}
{Z_{23}^2Z_{24}^2}, 
\end{align*}
which is also positive.
\end{proof}

\begin{lem} \label{lem:single_constraint}
If $Z = (Z_{ij}) \in M_4$ satisfies $Z_{12} = Z_{13}Z_{23}$, then the value of $\Delta$ at this $Z$ is positive: 
$$
\Delta({\scriptstyle Z_{12} = Z_{13}Z_{23}}) > 0.
$$
\end{lem}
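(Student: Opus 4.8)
The plan is to substitute the constraint $Z_{12}=Z_{13}Z_{23}$ into the completing-the-square identity
$\Delta=-(1-Z_{34}^2)(Z_{12}-b_0)^2+\Delta_{134}\Delta_{234}/(1-Z_{34}^2)$ from \S\ref{sec:proof} and simplify. Using the expression $b_0=Z_{13}Z_{23}+(Z_{14}-Z_{13}Z_{34})(Z_{24}-Z_{23}Z_{34})/(1-Z_{34}^2)$ obtained in the proof of Lemma \ref{lem:b_0}, on this slice the factor $Z_{12}-b_0$ becomes $-(Z_{14}-Z_{13}Z_{34})(Z_{24}-Z_{23}Z_{34})/(1-Z_{34}^2)$; combining this with $\Delta_{ijk}=(1-Z_{ij}^2)(1-Z_{jk}^2)-(Z_{ik}-Z_{ij}Z_{jk})^2$ (complete the square in $Z_{ik}$), the degree-four terms cancel and I obtain
\begin{align*}
\Delta({\scriptstyle Z_{12}=Z_{13}Z_{23}})
&= (1-Z_{13}^2)\Delta_{234}-(1-Z_{23}^2)(Z_{14}-Z_{13}Z_{34})^2\\
&= (1-Z_{23}^2)\Delta_{134}-(1-Z_{13}^2)(Z_{24}-Z_{23}Z_{34})^2 .
\end{align*}

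If $Z_{24}=Z_{23}Z_{34}$, the second line immediately gives $\Delta=(1-Z_{23}^2)\Delta_{134}>0$ since $\Delta_{134}>0$ on $M_4$. So assume $Z_{24}>Z_{23}Z_{34}$. In the first line the only dependence on $Z_{14}$ is through $-(1-Z_{23}^2)(Z_{14}-Z_{13}Z_{34})^2$, and $Z_{14}\ge Z_{13}Z_{34}$ on $M_4$ by the triangle inequality on $\{1,3,4\}$; hence, with all other coordinates (and $Z_{12}=Z_{13}Z_{23}$) held fixed, $\Delta$ is non-increasing in $Z_{14}$. Since the only defining inequalities of $M_4$ that involve $Z_{14}$ are the triangle inequalities on $\{1,3,4\}$ and $\{1,2,4\}$, we may replace $Z_{14}$ by its largest admissible value
$$
u=\min\Big\{\tfrac{Z_{34}}{Z_{13}},\ \tfrac{Z_{13}}{Z_{34}},\ \tfrac{Z_{12}}{Z_{24}},\ \tfrac{Z_{24}}{Z_{12}}\Big\}
$$
(the remaining lower bound $Z_{12}Z_{24}=Z_{13}Z_{23}Z_{24}$ being dominated by $Z_{13}Z_{34}$ because $Z_{23}Z_{24}\le Z_{34}$) without increasing $\Delta$; moreover $u<1$ under our assumption, so the new point still lies in $M_4$. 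It therefore suffices to prove $\Delta>0$ at a point with $Z_{14}=u$.

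Now case on which term realizes the minimum. If $u=Z_{34}/Z_{13}$, then together with the standing relation $Z_{23}=Z_{12}/Z_{13}$ this is precisely the hypothesis of Lemma \ref{lem:two_constraint_1}, so $\Delta>0$; if $u=Z_{12}/Z_{24}$, then together with $Z_{13}=Z_{12}/Z_{23}$ it is the hypothesis of Lemma \ref{lem:two_constraint_2}, so $\Delta>0$. The other two possibilities cannot arise under the standing assumption $Z_{24}>Z_{23}Z_{34}$: if $Z_{14}=Z_{13}/Z_{34}$, i.e.\ $Z_{13}=Z_{14}Z_{34}$, then $Z_{12}/Z_{14}=Z_{13}Z_{23}/Z_{14}=Z_{23}Z_{34}$, so the triangle inequalities $Z_{24}\le Z_{12}/Z_{14}$ (on $\{1,2,4\}$) and $Z_{24}\ge Z_{23}Z_{34}$ (on $\{2,3,4\}$) force $Z_{24}=Z_{23}Z_{34}$, a contradiction; similarly $Z_{14}=Z_{24}/Z_{12}$, i.e.\ $Z_{24}=Z_{12}Z_{14}$, gives $Z_{24}/Z_{23}=Z_{13}Z_{14}$, whence $Z_{34}\ge Z_{13}Z_{14}$ (on $\{1,3,4\}$) and $Z_{34}\le Z_{24}/Z_{23}$ (on $\{2,3,4\}$) force $Z_{34}=Z_{13}Z_{14}$ and then $Z_{24}=Z_{23}Z_{34}$, again a contradiction. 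This exhausts the cases.

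The computations are elementary throughout; the two places needing care are the opening simplification --- getting the two compact forms of $\Delta$, where the cancellation of the degree-four terms is the only subtlety --- and, in the monotonicity step, confirming that the admissible range of $Z_{14}$ is cut out exactly by the two triangles through the edge $14$ (so that bumping $Z_{14}$ up stays in $M_4$) and that the resulting right endpoint falls into one of the two cases handled by Lemmas \ref{lem:two_constraint_1} and \ref{lem:two_constraint_2} once the case $Z_{24}=Z_{23}Z_{34}$ is set aside. I do not expect a genuine obstacle beyond this bookkeeping.
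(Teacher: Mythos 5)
Your proof is correct and follows essentially the same route as the paper: the same compact form $\Delta({\scriptstyle Z_{12}=Z_{13}Z_{23}})=(1-Z_{13}^2)\Delta_{234}-(1-Z_{23}^2)(Z_{14}-Z_{13}Z_{34})^2$, monotone decrease in $Z_{14}$, pushing $Z_{14}$ to its largest admissible value, and reduction to Lemmas \ref{lem:two_constraint_1} and \ref{lem:two_constraint_2}. The only difference is bookkeeping: the paper shrinks the four-term upper bound to $\min\{Z_{34}/Z_{13},\,Z_{13}Z_{23}/Z_{24}\}$ directly via $Z_{24}\ge Z_{23}Z_{34}$, whereas you keep all four terms, dispose of the case $Z_{24}=Z_{23}Z_{34}$ separately, and rule out the other two endpoint cases by contradiction --- equivalent content (and your explicit check that the endpoint stays below $1$ is a welcome extra care).
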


\begin{proof}
We can express $\Delta({\scriptstyle Z_{12} = Z_{13}Z_{23}})$ as follows
$$
\Delta({\scriptstyle Z_{12} = Z_{13}Z_{23}})
= 
- (1 - Z_{23}^2)(Z_{14} - Z_{13}Z_{34})^2
+
(1 - Z_{13}^2)\Delta_{234},
$$
which can be thought of as a quadratic function in $Z_{14}$. If $Z \in M_4$, then
$$
\max \{ Z_{12}Z_{24}, Z_{13}Z_{34} \}
\le Z_{14}
\le
\min \bigg\{ 
\frac{Z_{24}}{Z_{12}}, 
\frac{Z_{12}}{Z_{24}},
\frac{Z_{34}}{Z_{13}}, 
\frac{Z_{13}}{Z_{34}} 
\bigg\}.
$$
Using $Z_{12} = Z_{13}Z_{23}$ and $Z_{23}Z_{34} \le Z_{24}$, one has
\begin{align*}
\min \bigg\{ 
\frac{Z_{24}}{Z_{12}}, 
\frac{Z_{12}}{Z_{24}},
\frac{Z_{34}}{Z_{13}}, 
\frac{Z_{13}}{Z_{34}} 
\bigg\}
&=
\min \bigg\{ 
\frac{Z_{24}}{Z_{13}Z_{23}}, 
\frac{Z_{13}Z_{23}}{Z_{24}},
\frac{Z_{34}}{Z_{13}}, 
\frac{Z_{13}}{Z_{34}}
\bigg\} \\
&=
\min \bigg\{ 
\frac{Z_{34}}{Z_{13}}, 
\frac{Z_{13}Z_{23}}{Z_{24}}
\bigg\}.
\end{align*}
Put $c_+ = \min\{ Z_{34}/Z_{13}, Z_{13}Z_{23}/Z_{24} \}$. Then the above expression of $\Delta({\scriptstyle Z_{12} = Z_{13}Z_{23}})$ leads to 
$$
\Delta({\scriptstyle Z_{12} = Z_{13}Z_{23}}) \ge 
\Delta({\scriptstyle Z_{12} = Z_{13}Z_{23}, Z_{14} = c_+ }).
$$
Now, the proof will be completed by showing the positivity of the last term. In the case that $c_+ = Z_{34}/Z_{13}$, we have
$$
\Delta({\scriptstyle Z_{12} = Z_{13}Z_{23}, Z_{14} = c_+ })
=
\Delta({\scriptstyle Z_{12} = Z_{13}Z_{23}, Z_{14} = \frac{Z_{34}}{Z_{13}}})
=
\Delta({\scriptstyle Z_{14} = \frac{Z_{34}}{Z_{13}}, Z_{23} = \frac{Z_{12}}{Z_{13}} }).
$$
This is positive by Lemma \ref{lem:two_constraint_1}. In the case that $c_+ = Z_{13}Z_{23}/Z_{24}$, we have
$$
\Delta({\scriptstyle Z_{12} = Z_{13}Z_{23}, Z_{14} = c_+})
=
\Delta({\scriptstyle Z_{12} = Z_{13}Z_{23}, Z_{14} = \frac{Z_{13}Z_{23}}{Z_{24}}})
=
\Delta({\scriptstyle Z_{13} = \frac{Z_{12}}{Z_{23}}, Z_{14} = \frac{Z_{12}}{Z_{24}}}).
$$
This is positive by Lemma \ref{lem:two_constraint_2}. 
\end{proof}

\begin{prop}[$L_1$] \label{prop:L}
We have $\Delta > 0$ if $Z = (Z_{ij}) \in M_4$ satisfies
$$
b_- = Z_{13}Z_{23} \le Z_{12} \le b_0.
$$
\end{prop}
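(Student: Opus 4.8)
The plan is to combine the completed-square form of $\Delta$ with Lemma \ref{lem:single_constraint}. Since neither $b_0$ nor $\Delta_{134},\Delta_{234}$ involves $Z_{12}$, the identity
$$
\Delta = - (1 - Z_{34}^2)(Z_{12} - b_0)^2 + \frac{\Delta_{134}\Delta_{234}}{1 - Z_{34}^2}
$$
exhibits $\Delta$ as a concave quadratic in the single variable $Z_{12}$ (the leading coefficient $-(1-Z_{34}^2)$ is negative because $Z_{34}\in(0,1)$), with vertex at $Z_{12}=b_0$. In particular $\Delta$ is non-decreasing in $Z_{12}$ throughout the interval $(-\infty,b_0]$.

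So given $Z=(Z_{ij})\in M_4$ with $b_-=Z_{13}Z_{23}\le Z_{12}\le b_0$, the first step is to push $Z_{12}$ down to its least possible value. Set
$$
Z' = (Z_{13}Z_{23},\, Z_{13},\, Z_{14},\, Z_{23},\, Z_{24},\, Z_{34}),
$$
the point obtained from $Z$ by replacing the first coordinate by $Z_{13}Z_{23}$. I would then check $Z'\in M_4$: among the triangle inequalities defining $M_4$, only $Z_{12}\ge Z_{13}Z_{23}$ and $Z_{12}\ge Z_{14}Z_{24}$ can be harmed by decreasing $Z_{12}$ (the remaining ones bound $Z_{12}$ from above or contain $Z_{12}$ only as a factor inside a subtracted product), and for $Z'$ the former holds with equality while the latter holds because the case assumption $b_-=Z_{13}Z_{23}$ means $Z_{13}Z_{23}\ge Z_{14}Z_{24}$. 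Hence $Z'\in M_4$, and $Z'$ satisfies the hypothesis $Z_{12}=Z_{13}Z_{23}$ of Lemma \ref{lem:single_constraint}.

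Finally, since both $Z_{13}Z_{23}$ and $Z_{12}$ lie in $(-\infty,b_0]$ and $\Delta$ is non-decreasing there, we get $\Delta(Z)\ge\Delta(Z')=\Delta({\scriptstyle Z_{12}=Z_{13}Z_{23}})>0$ by Lemma \ref{lem:single_constraint}, which is the assertion. The genuine computational content has already been absorbed into Lemmas \ref{lem:two_constraint_1}--\ref{lem:single_constraint}, so the only point here demanding a moment's care is the verification that lowering $Z_{12}$ to $Z_{13}Z_{23}$ keeps us inside $M_4$ — and that is precisely where the case assumption $b_-=Z_{13}Z_{23}$ (as opposed to $b_-=Z_{14}Z_{24}$) is used.
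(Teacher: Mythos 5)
Your proof is correct and takes essentially the same route as the paper: both use the completed-square formula to see that $\Delta$ is non-decreasing in $Z_{12}$ up to $b_0$, replace $Z_{12}$ by $Z_{13}Z_{23}$ (using the case assumption $b_-=Z_{13}Z_{23}$ to stay inside $M_4$), and conclude via Lemma \ref{lem:single_constraint}. Your explicit verification that $Z'\in M_4$ simply spells out what the paper leaves implicit.
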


\begin{proof}
Under the assumption of the proposition, $\Delta$ is increasing as a function in $Z_{12}$. Thus, if $Z \in M_4$, then we have
$$
Z' = (Z_{13}Z_{23}, Z_{13}, Z_{14}, Z_{23}, Z_{24}, Z_{34}) \in M_4,
$$
for which 
$$
\Delta = \Delta(Z) 
\ge \Delta(Z') = \Delta({\scriptstyle Z_{12} = Z_{13}Z_{23}}).
$$
The last term is positive by Lemma \ref{lem:single_constraint}.
\end{proof}

\begin{prop}[$R_1$] \label{prop:R}
We have $\Delta > 0$ if $Z = (Z_{ij}) \in M_4$ satisfies
$$
b_0 \le Z_{12} \le b_+ = \frac{Z_{23}}{Z_{13}}.
$$
\end{prop}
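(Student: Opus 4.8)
\textbf{Proof plan for Proposition \ref{prop:R}.}
The strategy mirrors that of Proposition \ref{prop:L}: exploit that $\Delta$, written via the completing-the-square formula $\Delta = -(1-Z_{34}^2)(Z_{12}-b_0)^2 + \Delta_{134}\Delta_{234}/(1-Z_{34}^2)$, is a downward parabola in $Z_{12}$ with vertex at $Z_{12} = b_0$. On the interval $b_0 \le Z_{12} \le b_+ = Z_{23}/Z_{13}$ the function $\Delta$ is therefore decreasing in $Z_{12}$, so for any $Z \in M_4$ in this case we may replace $Z_{12}$ by its upper endpoint $Z_{23}/Z_{13}$ and obtain
$$
\Delta = \Delta(Z) \ge \Delta(Z') = \Delta({\scriptstyle Z_{12} = \frac{Z_{23}}{Z_{13}}}),
\qquad
Z' = \Big(\tfrac{Z_{23}}{Z_{13}}, Z_{13}, Z_{14}, Z_{23}, Z_{24}, Z_{34}\Big),
$$
provided one checks that $Z' \in M_4$; this is immediate since the only constraints involving $Z_{12}$ are $b_- \le Z_{12} \le b_+$, and $Z_{12} = Z_{23}/Z_{13} = b_+$ lies in this range (using $b_- \le b_0 \le b_+$ from Lemma \ref{lem:b_0}, together with $b_0 \le Z_{23}/Z_{13}$ which is exactly the inequality proved inside Lemma \ref{lem:b_0}).

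It then remains to prove $\Delta({\scriptstyle Z_{12} = \frac{Z_{23}}{Z_{13}}}) > 0$, and the plan is to reduce this to the already-established single-constraint lemma (Lemma \ref{lem:single_constraint}). Observe that the constraint $Z_{12} = Z_{23}/Z_{13}$ is related by a permutation of $X_4$ to the constraint $Z_{12} = Z_{13}Z_{23}$ appearing in Lemma \ref{lem:single_constraint}: indeed, the transposition $(12)$ sends $Z_{12}\mapsto Z_{12}$, $Z_{13}\leftrightarrow Z_{23}$, $Z_{14}\leftrightarrow Z_{24}$, leaving $Z_{34}$ fixed, so it sends $Z_{12} = Z_{13}Z_{23}$ to itself — that is not yet the right permutation. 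The correct move is to notice that $Z_{12} = Z_{23}/Z_{13}$ rewrites as $Z_{13} = Z_{23}/Z_{12}$, i.e. $Z_{13}Z_{12} = Z_{23}$; relabelling so that the pair realizing the maximum defining $b_-$ for the \emph{new} variable plays the role of $Z_{13}Z_{23}$, one applies a transposition such as $(13)$ or $(23)$ to bring the identity $Z_{13}Z_{12} = Z_{23}$ into the form ``one off-diagonal entry equals the product of two others sharing a common index,'' which is precisely the hypothesis of Lemma \ref{lem:single_constraint} after the relabelling. Since $\Delta$ is invariant under all permutations of the points of $X_4$, positivity transfers, and the proof is complete.

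\textbf{The main obstacle.} The delicate point is the bookkeeping in the last paragraph: one must exhibit the \emph{specific} permutation $\sigma$ of $X_4$ carrying the hyperplane $\{Z_{12} = Z_{23}/Z_{13}\}$ onto $\{Z_{12} = Z_{13}Z_{23}\}$ (equivalently, onto one of the defining hyperplanes of $b_-$ in the permuted coordinates), and verify that $\sigma$ maps $M_4$ to $M_4$ (automatic, as the metric-space condition is permutation-symmetric) and that the image of the case-$(R_1)$ region sits inside the domain where Lemma \ref{lem:single_constraint} applies. I expect the cleanest route is: under $(23)$, the entry $Z_{13}\mapsto Z_{12}$ — no; rather, track indices carefully via a small table. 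Concretely, the identity $Z_{12}Z_{13} = Z_{23}$ says the $\{1,3\}$-entry, the $\{1,2\}$-entry, and the $\{2,3\}$-entry are related multiplicatively with $\{2,3\}$ as the ``product'' slot and $\{1\}$ the shared index of the two factors; comparing with the hypothesis of Lemma \ref{lem:single_constraint}, where $\{1,2\}$ is the product slot and $\{3\}$ is shared, the permutation sending $1\mapsto 3,\ 2\mapsto 1,\ 3\mapsto 2$ (the $3$-cycle $(1\,3\,2)$) does the job, fixing $4$. One then just records that this $3$-cycle leaves $\Delta$ invariant, so $\Delta({\scriptstyle Z_{12}=\frac{Z_{23}}{Z_{13}}}) = \Delta({\scriptstyle Z_{12}=Z_{13}Z_{23}}) > 0$ by Lemma \ref{lem:single_constraint}, finishing Proposition \ref{prop:R} and hence Theorem \ref{thm:det} and Theorem \ref{thm:meckes}.
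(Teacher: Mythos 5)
Your proposal is correct and takes essentially the same approach as the paper: use that $\Delta$ is decreasing in $Z_{12}$ on $[b_0,b_+]$ to push $Z_{12}$ to the endpoint $Z_{23}/Z_{13}$ (checking $Z'\in M_4$), then use permutation invariance of $\Delta$ to turn the constraint $Z_{12}Z_{13}=Z_{23}$ into the hypothesis of Lemma \ref{lem:single_constraint}. The only cosmetic difference is the relabeling: the paper uses the transposition $(13)$, which already sends the shared index $1$ to $3$ and the product slot $\{2,3\}$ to $\{1,2\}$, whereas you use a $3$-cycle; both work.
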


\begin{proof}
Under the present assumption, the function $\Delta$ is decreasing in $Z_{12}$. Thus, if $Z \in M_4$ satisfies $b_0 \le Z_{12} \le b_+ = Z_{23}/Z_{13}$, then we have
$$
Z' = (Z_{23}/Z_{13}, Z_{13}, Z_{14}, Z_{23}, Z_{24}, Z_{34})
\in M_4,
$$
for which we have
$$
\Delta = \Delta(Z) \ge 
\Delta(Z') = \Delta({\scriptstyle Z_{12} = \frac{Z_{23}}{Z_{13}}}).
$$
Note that the permutation $(13)$ exchanging the points $1$ and $3$ on $X_4$ transforms the constraint $Z_{23} = Z_{12}Z_{13}$ into $Z_{12} = Z_{13}Z_{23}$. Thus, by Lemma \ref{lem:single_constraint}, we see
$$
\Delta({\scriptstyle Z_{12} = \frac{Z_{23}}{Z_{13}}})
= 
\Delta({\scriptstyle Z_{23} = Z_{12}Z_{13} }) > 0,
$$
and the proof is completed.
\end{proof}


\section{The inclusion-exclusion principle}
\label{sec:incl_excl}

\subsection{Magnitude}

We here review the magnitude of a finite metric space, and its inclusion-exclusion principle \cite{L1}. 

\smallskip

Let $X = (X, d)$ be a finite metric space. As in \S\ref{sec:introduction}, the zeta matrix of $(X, d)$ is defined by 
$$
\zeta_X = ( e^{-d(i, j)})_{i, j \in X}
= (Z_{ij})_{i, j \in X},
$$
where $Z_{ij} = e^{-d(i, j)}$. We write $\Delta_X = \det \zeta_X$ for the determinant of the zeta matrix. A \textit{weighting} (or a \textit{weight} for short) on $X$ is a vector $\vec{w}_X \in \R^n$ such that $\zeta_X \vec{w}_X = \1$, where $n$ is the cardinality of $X$, and $\1 \in \R^n$ denotes the vector whose entries are $1$. In general, $X$ may not have a weight. If $\Delta_X \neq 0$, then $\vec{w}_X = \zeta_X^{-1}\1$ is the unique weight on $X$. When $X$ admits a weight $\vec{w}_X$, its magnitude $\Mag(X)$ is defined as the sum of the entries of $\vec{w}_X$. If $\Delta_X \neq 0$, then $\Mag(X)$ agrees with the sum of the entries of $\zeta_X^{-1}$. Using the standard inner product $\langle \ , \ \rangle$ on $\R^n$, we can express the magnitude as
$$
\Mag(X) = \langle \1, \vec{w}_X \rangle = \langle \1, \zeta_X^{-1} \1 \rangle.
$$
For example, we have
\begin{align*}
\Mag(\{ 1, 2 \}) &= \frac{2}{1 + Z_{12}}, &
\Mag(\{ 1, 2, 3 \}) &= 1 + \frac{2(1 - Z_{12})(1 - Z_{13})(1 - Z_{23})}{\Delta_{123}},
\end{align*}
where $\Delta_{123} = \Delta_{\{ 1, 2, 3 \}}$.

\bigskip

For the magnitude of $X$ to satisfy the inclusion-exclusion principle with respect to subspaces $A, B \subset X$, a condition has been known \cite{L1}. For clarity, we reformulate the condition in \cite{L1} as follows:

\begin{dfn} \label{dfn:conditions}
Let $A, B \subset X$ be subspaces. We define conditions (C1) and (C2) as follows:

\begin{enumerate}
\item[(C1)]
For any $a \in A$ and $b \in B$, there exists $c \in A \cap B$ such that
$$
d(a, b) = d(a, c) + d(c, b).
$$

\item[(C2)]
Either of the following holds true:
\begin{itemize}
\item[($\mathrm{C2}_a$)]
Any $a \in A$ admits $\pi(a) \in A \cap B$ such that: 
$$
d(a, c) = d(a, \pi(a)) + d(\pi(a), c)
$$ 
for all $c \in A \cap B$.

\item[($\mathrm{C2}_b$)]
Any $b \in A$ admits $\pi(b) \in A \cap B$ such that: 
$$
d(a, c) = d(b, \pi(b)) + d(\pi(b), c)
$$ 
for all $c \in A \cap B$.

\end{itemize}
\end{enumerate}
\end{dfn}

The condition ($\mathrm{C2}_a$) is termed ``$A$ projects to $A \cap B$'' in \cite{L2}, and ``$A \cap B$ is gated in $A$'' in \cite{B-K,D-S}. The condition ``$A$ projects to $B$'' in \cite{L1} is equivalent to (C1) and ($\mathrm{C2}_a$).

\medskip

Now, we state the inclusion-exclusion principle essentially due to Leinster:

\begin{thm}[\cite{L2,L1}] \label{thm:inclusion_exclusion_Leinster}
Let $A$ and $ B$ be subspaces of a finite metric space $X$ such that $A \cup B = X$ and $A \cap B \neq \emptyset$. Suppose that:
\begin{itemize}
\item
$\Mag(A)$ and $\Mag(B)$ are defined; and 

\item
(C1) and (C2) are satisfied.

\end{itemize}
Then $\Mag(A \cap B)$ and $\Mag(X_n)$ are defined, and
$$
\Mag(X_n) = \Mag(A) + \Mag(B) - \Mag(A \cap B).
$$
\end{thm}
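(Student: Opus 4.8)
The plan is to glue the given weightings together: from $\vec{w}_A$ on $A$, $\vec{w}_B$ on $B$, and a weighting on $A\cap B$ to be constructed on the way, we build a weighting on $X=A\cup B$ whose entries sum to $\Mag(A)+\Mag(B)-\Mag(A\cap B)$. Since the hypotheses and the conclusion are symmetric in $A$ and $B$, and (C1) is itself symmetric, we may assume that (C2$_a$) holds; this gives a map $\pi\colon A\to A\cap B$ with $Z_{ac}=Z_{a\pi(a)}Z_{\pi(a)c}$ for all $a\in A$ and $c\in A\cap B$, where $Z_{ij}=e^{-d(i,j)}$ (so the triangle inequality reads $Z_{ij}\ge Z_{ik}Z_{kj}$). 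The first step is to promote this to
$$
Z_{ab}=Z_{a\pi(a)}Z_{\pi(a)b}\qquad(a\in A,\ b\in B).
$$
Indeed, by (C1) there is $c\in A\cap B$ with $Z_{ab}=Z_{ac}Z_{cb}$; substituting $Z_{ac}=Z_{a\pi(a)}Z_{\pi(a)c}$ and using $Z_{\pi(a)c}Z_{cb}\le Z_{\pi(a)b}$ gives ``$\le$'', while the triangle inequality $Z_{ab}\ge Z_{a\pi(a)}Z_{\pi(a)b}$ gives ``$\ge$''.

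Second, define $\vec{v}\in\R^{A\cap B}$ by $v_c=\sum_{a\in A,\ \pi(a)=c}Z_{ac}\,(w_A)_a$. For $c\in A\cap B$, substituting $Z_{ca}=Z_{\pi(a)c}Z_{a\pi(a)}$ and regrouping the sum over the fibres of $\pi$ shows $\sum_{c'\in A\cap B}Z_{cc'}v_{c'}=\sum_{a\in A}Z_{ca}(w_A)_a=1$, the last equality being the $c$-th component of $\zeta_A\vec{w}_A=\1$. Hence $\zeta_{A\cap B}\vec{v}=\1$, so $A\cap B$ admits a weighting and $\Mag(A\cap B)$ is defined; we henceforth take $\vec{w}_{A\cap B}:=\vec{v}$.

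Third, define $\vec{w}\in\R^X$ by $w_x=(w_A)_x\,[x\in A]+(w_B)_x\,[x\in B]-(w_{A\cap B})_x\,[x\in A\cap B]$ and verify $\zeta_X\vec{w}=\1$ componentwise, splitting $X=A\cup B$. For $x=a\in A$ the three resulting sums equal $1$ (the $a$-th component of $\zeta_A\vec{w}_A=\1$), $Z_{a\pi(a)}$ (insert $Z_{ay}=Z_{a\pi(a)}Z_{\pi(a)y}$ for $y\in B$, then use $\zeta_B\vec{w}_B=\1$ at $\pi(a)$), and $Z_{a\pi(a)}$ (insert the same for $y\in A\cap B$, then use $\zeta_{A\cap B}\vec{v}=\1$ at $\pi(a)$), and so they sum to $1$. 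For $x=b\in B\setminus A$ the sum over $B$ equals $1$, while inserting $Z_{yb}=Z_{y\pi(y)}Z_{\pi(y)b}$ for $y\in A$ and regrouping over the fibres of $\pi$ turns the sum over $A$ into $\sum_{c\in A\cap B}Z_{cb}v_c$, which is exactly the sum over $A\cap B$; these cancel, leaving $1$. Thus $\vec{w}$ is a weighting on $X$, so $\Mag(X)$ is defined, and summing the defining relation for $\vec{w}$ over $x\in X$ gives $\Mag(X)=\langle\1,\vec{w}_A\rangle+\langle\1,\vec{w}_B\rangle-\langle\1,\vec{v}\rangle=\Mag(A)+\Mag(B)-\Mag(A\cap B)$.

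The only genuinely delicate point is the case $x\in B\setminus A$ in the last step: for the cancellation to succeed one must use the particular weighting $\vec{v}$ constructed in the second step, not an arbitrary weighting on $A\cap B$, and one must already have the full multiplicativity $Z_{ab}=Z_{a\pi(a)}Z_{\pi(a)b}$ for $a\in A$, $b\in B$ — which is the one place (C1) is used.
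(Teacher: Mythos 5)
Your proof is correct, and it is essentially the argument the paper delegates to Leinster's references: you upgrade (C1)+(C2$_a$) to the projection identity $Z_{ab}=Z_{a\pi(a)}Z_{\pi(a)b}$, push the weighting on $A$ forward along $\pi$ to obtain a weighting on $A\cap B$, and glue $\vec{w}_A+\vec{w}_B-\vec{v}$ into a weighting on $X$, exactly as in \cite{L2,L1}. The one detail worth a word (standard, and implicit in the paper's definition of magnitude) is that the magnitude of a space admitting a weighting is independent of the chosen weighting, so $\langle\1,\vec{v}\rangle$ really is $\Mag(A\cap B)$.
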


The statement above is not exactly the same as that in \cite{L1} (Proposition 2.3.2), but can be shown by applying arguments in \cite{L2,L1}.

\subsection{A generalization of the key formula}

Let us realize an $n$-point set as $X_n = \{ 1, 2, \ldots, n \}$, and consider a metric $d$ on $X_n$. The key formula for the direct proof in \S\ref{sec:proof} was the expression of $\Delta_{X_4}$ given by completing the square. We here provide its generalization to $\Delta_{X_n}$. For its description, we start with a lemma to be used repeatedly:

\begin{lem} \label{lem:formula_for_determinant}
Suppose $n \ge 2$. For a real number $x \in \R$, vectors $\vec{a}, \vec{b} \in \R^{n-1}$, and a symmetric matrix $M$ of size $n-1$, we consider a square matrix of size $n$
$$
\left(
\begin{array}{cc}
x & {}^t\vec{a} \\
\vec{b} & M
\end{array}
\right),
$$
where ${}^t( \ )$ stands for the transpose. Then the determinant of the above matrix is
$$
\begin{array}{|cc|}
x & {}^t\vec{a} \\
\vec{b} & M
\end{array}
=
x \lvert M \rvert - \langle \vec{a}, \widetilde{M}\vec{b} \rangle,
$$
where $\widetilde{M}$ is the adjugate matrix (the transpose of the cofactor) of $M$. Thus, in particular, if $\lvert M \rvert \neq 0$, then $M^{-1} = \widetilde{M}/\lvert M \rvert$ and 
$$
\begin{array}{|cc|}
x & {}^t\vec{a} \\
\vec{b} & M
\end{array}
= \lvert M \rvert (x - \langle \vec{a}, M^{-1}\vec{b} \rangle).
$$
\end{lem}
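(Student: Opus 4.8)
The plan is to prove the cofactor‐expansion identity directly along the first row, keeping careful track of how the minors of the big matrix relate to the entries of the adjugate $\widetilde M$. Write the size-$n$ matrix as $N=\left(\begin{smallmatrix} x & {}^t\vec a\\ \vec b & M\end{smallmatrix}\right)$, with $\vec a=(a_1,\dots,a_{n-1})$ and $\vec b=(b_1,\dots,b_{n-1})$. Expanding $\det N$ along the first row gives $\det N = x\,|M| + \sum_{j=1}^{n-1} a_j\,C_{1,j+1}$, where $C_{1,j+1}$ is the $(1,j+1)$-cofactor of $N$, i.e.\ $(-1)^{1+(j+1)}$ times the minor obtained by deleting row $1$ and column $j+1$ of $N$. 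That minor is the determinant of the $(n-1)\times(n-1)$ matrix whose first column is $\vec b$ and whose remaining columns are the columns of $M$ with the $j$-th column removed.

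Next I would identify that minor with an entry of $\widetilde M$. Expanding it along its first column (the $\vec b$ column) yields $\sum_{i=1}^{n-1} b_i (-1)^{i+1} m_{ij}'$, where $m_{ij}'$ is the determinant of the matrix obtained from $M$ by deleting its $i$-th row and $j$-th column — precisely the $(i,j)$-minor of $M$. Combining the two sign factors, the coefficient of $a_j b_i$ in $\det N$ turns out to be $(-1)^{i+j}$ times the $(i,j)$-minor of $M$, which is by definition the $(i,j)$-cofactor of $M$; and since $\widetilde M$ is the transpose of the cofactor matrix, this cofactor equals $(\widetilde M)_{ji}$. Hence $\det N = x|M| - \sum_{i,j} a_j b_i (\widetilde M)_{ji}$ — wait, I must chase the overall sign once more: the first-row cofactor carries $(-1)^{j}$ after simplifying $(-1)^{1+(j+1)}=(-1)^{j}$, and the first-column expansion of the minor carries $(-1)^{i+1}$, so the product is $(-1)^{i+j+1}$, i.e.\ $-(-1)^{i+j}$. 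This produces exactly $\det N = x|M| - \sum_{i,j} (\widetilde M)_{ji}\, b_i\, a_j = x|M| - \langle \vec a, \widetilde M \vec b\rangle$, using that $\langle \vec a,\widetilde M\vec b\rangle = \sum_{j} a_j (\widetilde M\vec b)_j = \sum_{i,j} a_j (\widetilde M)_{ji} b_i$.

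The second assertion is then immediate: if $|M|\neq 0$ then the standard identity $\widetilde M = |M|\,M^{-1}$ gives $M^{-1}=\widetilde M/|M|$, and substituting $\widetilde M = |M|M^{-1}$ into the formula yields $\det N = |M|\bigl(x - \langle \vec a, M^{-1}\vec b\rangle\bigr)$.

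The only genuine obstacle is bookkeeping of signs: one has to be scrupulous about which $(-1)^{\text{something}}$ comes from the first-row expansion of $N$ versus the first-column expansion of the reduced minor, and about the transpose in the definition of the adjugate (so that the quadratic form comes out as $\langle \vec a, \widetilde M\vec b\rangle$ rather than $\langle \vec b, \widetilde M\vec a\rangle$ — though of course these coincide once $M$, and hence $\widetilde M$, is symmetric, which is the case of interest here). Apart from that, the argument is a routine double cofactor expansion, and symmetry of $M$ makes the final bilinear form manifestly the one stated.
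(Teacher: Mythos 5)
Your proof is correct, and it is essentially the paper's own argument made explicit: the paper splits off the $x\lvert M\rvert$ term by linearity in the first column and cites Cramer's formula to identify the remainder with $-\langle \vec{a}, \widetilde{M}\vec{b}\rangle$, which is exactly the double cofactor expansion (first row of the big matrix, then first column of each minor) that you carry out by hand, with the signs correctly tracked. No further comment is needed.
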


\begin{proof}
By the linearity of the determinant in the first column, we get
$$
\begin{array}{|cc|}
x & {}^t\vec{a} \\
\vec{b} & M
\end{array}
=
\begin{array}{|cc|}
x & {}^t\vec{a} \\
0 & M
\end{array}
+
\begin{array}{|cc|}
0 & {}^t\vec{a} \\
\vec{b} & M
\end{array}
=
x \lvert M \rvert + 
\begin{array}{|cc|}
0 & {}^t\vec{a} \\
\vec{b} & M
\end{array}.
$$
In view of Cramer's formula, the last term is identified with $- \langle \vec{a}, \widetilde{M}\vec{b} \rangle$.
\end{proof}

\begin{dfn} \label{dfn:b_0}
For $n \ge 3$, let $A, B \subset X_n = \{ 1, 2, \ldots, n \}$ be the subspaces
\begin{align*}
A &= \{ 1, 3, 4, \ldots, n \}, &
B &= \{ 2, 3, 4, \ldots, n \}.
\end{align*}
Assuming $\Delta_{A \cap B} \neq 0$, we define $b_-$ and $b_0$ by
\begin{align*}
b_- &= \max\{ Z_{1j}Z_{2j} |\ j = 3, 4, \ldots, n \}, \\
b_0 &= 
- \frac{1}{\Delta_{A \cap B}}
\begin{array}{|cc|}
0 & {}^t\vec{a} \\
\vec{b} & \zeta_{A \cap B}
\end{array}
=
- \frac{1}{\Delta_{A \cap B}}
\begin{array}{|cccccc|}
0 & Z_{13} & Z_{14} & Z_{15} & \cdots & Z_{1n} \\
Z_{23} & 1 & Z_{34} & Z_{35} & \cdots & Z_{3n} \\
Z_{24} & Z_{34} & 1 & Z_{45} & \cdots & Z_{4n} \\
Z_{25} & Z_{35} & Z_{45} & 1 & \cdots & Z_{5n} \\
\vdots & \vdots & \vdots & \vdots & \ddots & \vdots \\
Z_{2n} & Z_{3n} & Z_{4n} & Z_{5n} & \cdots & 1
\end{array},
\end{align*}
where vectors $\vec{a}, \vec{b} \in \R^{n-2}$ are given by
\begin{align*}
{}^t\vec{a} &= (Z_{13}, Z_{14}, \ldots, Z_{1n}), &
{}^t\vec{b} &= (Z_{23}, Z_{24}, \ldots, Z_{2n}).
\end{align*}
\end{dfn}

By Lemma \ref{lem:formula_for_determinant}, we have
\begin{align*}
\Delta_A &= 
\begin{array}{|cc|}
1 & {}^t\vec{a} \\
\vec{a} & \zeta_{A \cap B}
\end{array}
= 
\Delta_{A \cap B} 
- \langle \vec{a}, \widetilde{\zeta}_{A \cap B} \vec{a} \rangle
=
\Delta_{A \cap B} 
(1 - \langle \vec{a}, \zeta_{A \cap B}^{-1}\vec{a} \rangle),
\end{align*}
where $\Delta_{A \cap B} \neq 0$ is assumed for the last equality. Replacing $\vec{a}$ in the above with $\vec{b}$, we get the corresponding formula for $\Delta_B$. We also have
$$
b_0 = \langle \vec{a}, \zeta_{A \cap B}^{-1}\vec{b} \rangle.
$$

\smallskip

\begin{lem} \label{lem:adjugate}
In the setup of Definition \ref{dfn:b_0}, we have
$$
\widetilde{\zeta}_B
=
\Delta_{A \cap B}
\left(
\begin{array}{cc}
1 & - {}^t(\zeta_{A \cap B}^{-1}\vec{b}) \\
- \zeta_{A \cap B}^{-1} \vec{b} & 
\frac{\Delta_B}{\Delta_{A \cap B}} \zeta_{A \cap B}^{-1} 
+ (\zeta_{A \cap B}^{-1}\vec{b}) {}^t(\zeta_{A \cap B}^{-1}\vec{b}) 
\end{array}
\right).
$$
\end{lem}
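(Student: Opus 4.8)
The plan is to compute $\widetilde{\zeta}_B$ by brute inversion and then rewrite the result in block form. Since $\zeta_B$ is a symmetric matrix of size $n-1$ with the block decomposition
$$
\zeta_B =
\left(
\begin{array}{cc}
1 & {}^t\vec{b} \\
\vec{b} & \zeta_{A \cap B}
\end{array}
\right),
$$
where the lower-right block $\zeta_{A \cap B}$ has nonzero determinant by assumption, I would first note that $\Delta_B \neq 0$ is \emph{not} assumed, so I cannot simply invert $\zeta_B$; instead I must work with the adjugate directly. The cleanest route is to use the identity $\widetilde{\zeta}_B = \Delta_B \, \zeta_B^{-1}$ only formally, or better, to verify the claimed formula by checking that the product of $\zeta_B$ with the claimed expression for $\widetilde{\zeta}_B$ equals $\Delta_B \cdot I_{n-1}$; this holds as a polynomial identity in the entries and hence is valid regardless of whether $\Delta_B$ vanishes.

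Concretely, first I would record the Schur-complement inversion formula: when $\Delta_B \neq 0$,
$$
\zeta_B^{-1} =
\left(
\begin{array}{cc}
s^{-1} & - s^{-1}\,{}^t(\zeta_{A\cap B}^{-1}\vec b) \\
- s^{-1}\,\zeta_{A\cap B}^{-1}\vec b & \zeta_{A\cap B}^{-1} + s^{-1}(\zeta_{A\cap B}^{-1}\vec b){}^t(\zeta_{A\cap B}^{-1}\vec b)
\end{array}
\right),
$$
where $s = 1 - \langle \vec b, \zeta_{A\cap B}^{-1}\vec b\rangle$ is the Schur complement. By Lemma~\ref{lem:formula_for_determinant} applied to $\zeta_B$ (with $x=1$, $\vec a = \vec b$, $M = \zeta_{A\cap B}$) we have $\Delta_B = \Delta_{A\cap B}\, s$, so $s = \Delta_B/\Delta_{A\cap B}$. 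Multiplying $\zeta_B^{-1}$ by $\Delta_B$ and substituting $\Delta_B = s\,\Delta_{A\cap B}$ turns the $(1,1)$ entry into $\Delta_{A\cap B}$, the off-diagonal blocks into $-\Delta_{A\cap B}\,\zeta_{A\cap B}^{-1}\vec b$ (and its transpose), and the lower-right block into $\Delta_B\,\zeta_{A\cap B}^{-1} + \Delta_{A\cap B}(\zeta_{A\cap B}^{-1}\vec b){}^t(\zeta_{A\cap B}^{-1}\vec b)$, which is exactly $\Delta_{A\cap B}$ times the claimed matrix. This proves the formula on the Zariski-open set where $\Delta_B \neq 0$.

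Finally I would remove the hypothesis $\Delta_B \neq 0$ by a density/polynomial argument: both sides of the asserted equation are rational functions of the entries $Z_{ij}$ (with denominator a power of $\Delta_{A\cap B}$, which we have assumed nonzero), and they agree on the dense subset $\{\Delta_B \neq 0\}$ of the locus $\{\Delta_{A\cap B}\neq 0\}$; hence they agree identically on $\{\Delta_{A\cap B}\neq 0\}$. Alternatively, and perhaps more transparently, I would just verify the matrix identity
$$
\zeta_B \cdot \Delta_{A\cap B}
\left(
\begin{array}{cc}
1 & - {}^t(\zeta_{A \cap B}^{-1}\vec{b}) \\
- \zeta_{A \cap B}^{-1} \vec{b} &
\frac{\Delta_B}{\Delta_{A \cap B}} \zeta_{A \cap B}^{-1}
+ (\zeta_{A \cap B}^{-1}\vec{b}) {}^t(\zeta_{A \cap B}^{-1}\vec{b})
\end{array}
\right)
= \Delta_B \, I_{n-1}
$$
by expanding the four blocks of the product: the $(1,1)$ block gives $\Delta_{A\cap B} - \langle \vec b,\zeta_{A\cap B}^{-1}\vec b\rangle \Delta_{A\cap B} = \Delta_B$; the $(1,2)$, $(2,1)$ blocks vanish after cancellation; and the $(2,2)$ block collapses to $\Delta_B \, I_{n-2}$ using $\zeta_{A\cap B}\zeta_{A\cap B}^{-1} = I$. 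The main obstacle is purely bookkeeping: keeping the Schur-complement scalar $s$, the determinant $\Delta_B$, and the vector $\zeta_{A\cap B}^{-1}\vec b$ consistently tracked through the block multiplication, and making sure the argument does not secretly require $\Delta_B\neq 0$. There is no conceptual difficulty beyond the standard block-matrix inversion identity.
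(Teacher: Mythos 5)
Your main line of argument is correct, and at its core it is the same Schur-complement block computation as the paper's: the paper writes $\widetilde{\zeta}_B$ as an unknown symmetric block matrix and solves the defining equation $\zeta_B\widetilde{\zeta}_B=\Delta_B E$ using $\Delta_{A\cap B}\neq 0$, whereas you compute $\zeta_B^{-1}$ by the block-inversion formula, identify the Schur complement $s=1-\langle\vec b,\zeta_{A\cap B}^{-1}\vec b\rangle$ with $\Delta_B/\Delta_{A\cap B}$ via Lemma \ref{lem:formula_for_determinant}, rescale by $\Delta_B$, and then remove the auxiliary hypothesis $\Delta_B\neq 0$ by a density/rational-identity argument. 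Your explicit treatment of the case $\Delta_B=0$ is in fact a point in your favour: the lemma is later invoked (Proposition \ref{prop:determinant_general}, Lemma \ref{lem:defect}) with only $\Delta_{A\cap B}\neq 0$ assumed, and some such argument is genuinely needed there. One caveat: your ``more transparent'' alternative --- verifying $\zeta_B M=\Delta_B I_{n-1}$ for the claimed matrix $M$ and declaring this sufficient ``regardless of whether $\Delta_B$ vanishes'' --- does not by itself prove $M=\widetilde{\zeta}_B$ in the degenerate case, since when $\Delta_B=0$ the matrix $\zeta_B$ is singular of rank $n-2$ and the equation $\zeta_B M=0$ has many solutions, so it no longer characterizes the adjugate; the paper's phrase ``uniquely solved'' quietly skirts the same issue. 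If you wanted to avoid the density step, you could instead note that for $\Delta_B=0$ both $M$ and $\widetilde{\zeta}_B$ are symmetric of rank at most one with columns in the one-dimensional kernel of $\zeta_B$ (spanned by the vector with first entry $1$ and remaining block $-\zeta_{A\cap B}^{-1}\vec b$), and then compare $(1,1)$ entries, both of which equal the minor $\Delta_{A\cap B}$. As written, though, your Schur-complement-plus-density proof is complete.
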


\begin{proof}
Using $x \in \R$, $\vec{y} \in \R^{n-2}$ and a symmetric matrix $N$ of size $n-2$, we can express $\widetilde{\zeta}_B$ as
$$
\widetilde{\zeta}_B
=
\left(
\begin{array}{cc}
x & {}^t\vec{y} \\
\vec{y} & N
\end{array}
\right).
$$
The adjugate of $\zeta_B$ is subject to $\zeta_B \widetilde{\zeta}_B = \Delta_B E$, where $E$ is the identity matrix. Under the assumption $\Delta_{A \cap B} \neq 0$, this equation in $x, \vec{y}, N$ is uniquely solved.
\end{proof}

\begin{prop} \label{prop:determinant_general}
In the setup of Definition \ref{dfn:b_0}, we have
$$
\Delta_{X_n} = - \Delta_{A \cap B}(Z_{12} - b_0)^2 
+ \frac{\Delta_A \Delta_B}{\Delta_{A \cap B}}.
$$
\end{prop}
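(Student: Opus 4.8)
\textbf{Proof proposal for Proposition \ref{prop:determinant_general}.}

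The plan is to view $\Delta_{X_n}$ as a quadratic polynomial in the single variable $Z_{12}$ and to complete the square, exactly as was done by hand for $n=4$ in \S\ref{sec:proof}, but now using Lemma \ref{lem:formula_for_determinant} to organize the bookkeeping. Write the zeta matrix $\zeta_{X_n}$ in block form by singling out the first two rows and columns: the top-left $2\times 2$ block is $\left(\begin{smallmatrix} 1 & Z_{12} \\ Z_{12} & 1 \end{smallmatrix}\right)$, the top-right block is the $2\times(n-2)$ matrix whose rows are ${}^t\vec{a}$ and ${}^t\vec{b}$, and the bottom-right block is $\zeta_{A\cap B}$. First I would apply Lemma \ref{lem:formula_for_determinant} in the variable $Z_{12}$: since $\Delta_{X_n}$ is a degree-two polynomial in $Z_{12}$, it is determined by its leading coefficient, its value of the first derivative, and its constant term, or — more cleanly — by expanding the determinant along the first row to peel off the $Z_{12}$-dependence.

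The cleanest route is: expand $\Delta_{X_n}$ along its first row and then along its first column, writing $\Delta_{X_n} = 1\cdot\Delta_A' - Z_{12}\cdot(\text{cofactor}) + \cdots$, but it is tidier to instead use Lemma \ref{lem:formula_for_determinant} twice. Regard $\zeta_{X_n}$ as $\left(\begin{smallmatrix} 1 & {}^t\vec{u} \\ \vec{u} & \zeta_B \end{smallmatrix}\right)$, where ${}^t\vec{u} = (Z_{12}, Z_{13}, \ldots, Z_{1n}) = (Z_{12}, {}^t\vec{a})$ and the index set of $\zeta_B$ is $\{2,3,\ldots,n\}$. Lemma \ref{lem:formula_for_determinant} gives
$$
\Delta_{X_n} = \Delta_B - \langle \vec{u}, \widetilde{\zeta}_B \vec{u}\rangle.
$$
Now substitute the explicit formula for $\widetilde{\zeta}_B$ from Lemma \ref{lem:adjugate}. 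Splitting $\vec{u} = (Z_{12}, \vec{a})$ against the block form of $\widetilde{\zeta}_B$ yields
$$
\langle \vec{u}, \widetilde{\zeta}_B\vec{u}\rangle
= \Delta_{A\cap B}\Big( Z_{12}^2 - 2 Z_{12}\langle \vec{a}, \zeta_{A\cap B}^{-1}\vec{b}\rangle + \tfrac{\Delta_B}{\Delta_{A\cap B}}\langle \vec{a},\zeta_{A\cap B}^{-1}\vec{a}\rangle + \langle \vec{a},\zeta_{A\cap B}^{-1}\vec{b}\rangle^2 \Big).
$$
Using $b_0 = \langle \vec{a}, \zeta_{A\cap B}^{-1}\vec{b}\rangle$ and the identity $\Delta_A = \Delta_{A\cap B}(1 - \langle \vec{a},\zeta_{A\cap B}^{-1}\vec{a}\rangle)$ recorded just after Definition \ref{dfn:b_0}, the bracketed expression becomes $Z_{12}^2 - 2Z_{12}b_0 + b_0^2 + \tfrac{\Delta_B}{\Delta_{A\cap B}}\big(1 - \tfrac{\Delta_A}{\Delta_{A\cap B}}\big)$, so that
$$
\Delta_{X_n} = \Delta_B - \Delta_{A\cap B}(Z_{12}-b_0)^2 - \Delta_B\Big(1 - \tfrac{\Delta_A}{\Delta_{A\cap B}}\Big) = -\Delta_{A\cap B}(Z_{12}-b_0)^2 + \frac{\Delta_A\Delta_B}{\Delta_{A\cap B}},
$$
which is the claim.

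The only real obstacle is Lemma \ref{lem:adjugate} — i.e., getting the block decomposition of the adjugate $\widetilde{\zeta}_B$ correct — but that is already supplied by the preceding lemma, so here it reduces to a careful but routine substitution and collection of the $Z_{12}^2$, $Z_{12}$, and constant terms. I would double-check the derivation by specializing to $n=4$ and confirming it reproduces the displayed formula $\Delta = -(1-Z_{34}^2)(Z_{12}-b_0)^2 + \Delta_{134}\Delta_{234}/(1-Z_{34}^2)$ from \S\ref{sec:proof}, since there $A\cap B = \{3,4\}$ gives $\Delta_{A\cap B} = 1 - Z_{34}^2$, $\Delta_A = \Delta_{134}$, and $\Delta_B = \Delta_{234}$. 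One should also note that $\Delta_{A\cap B}\neq 0$ is exactly the standing hypothesis under which $\zeta_{A\cap B}^{-1}$, Lemma \ref{lem:adjugate}, and the formula for $b_0$ all make sense, so no further case distinction is needed.
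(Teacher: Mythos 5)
Your proposal is correct and takes essentially the same route as the paper: both reduce $\Delta_{X_n}$ to an expansion against the block $\zeta_B$ via Lemma \ref{lem:formula_for_determinant} and then substitute the adjugate formula of Lemma \ref{lem:adjugate}, using $b_0 = \langle \vec{a}, \zeta_{A\cap B}^{-1}\vec{b}\rangle$ and $\Delta_A = \Delta_{A\cap B}(1 - \langle \vec{a},\zeta_{A\cap B}^{-1}\vec{a}\rangle)$ to collect terms. The only (cosmetic) difference is that you keep $Z_{12}$ inside the vector $(Z_{12},{}^t\vec{a})$ and expand once, whereas the paper first isolates the quadratic dependence on $Z_{12}$ and evaluates the remainder at $Z_{12}=0$.
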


\begin{proof}
By definition, we have
$$
\Delta_{X_n} =
\begin{array}{|ccc|}
1 & Z_{12} & {}^t\vec{a} \\
Z_{12} & 1 & {}^t\vec{b} \\
\vec{a} & \vec{b} & \zeta_{A \cap B}
\end{array}.
$$
Taking the derivative with respect to $Z_{12}$, we readily see
$$
\Delta_{X_n} = - \Delta_{A \cap B}(Z_{12} - b_0)^2
+ \Delta_{A \cap B} b_0^2 + \Delta({\scriptstyle Z_{12} = 0 }).
$$
Therefore the proof will be completed by showing that $\Delta_{A \cap B} b_0^2 + \Delta({\scriptstyle Z_{12} = 0 })$ agrees with $\Delta_A \Delta_B/\Delta_{A \cap B}$. Lemma \ref{lem:formula_for_determinant} leads to
$$
\Delta({\scriptstyle Z_{12} = 0})
=
\begin{array}{|ccc|}
1 & 0 & {}^t\vec{a} \\
0 & 1 & {}^t\vec{b} \\
\vec{a} & \vec{b} & \zeta_{A \cap B}
\end{array}
=
\Delta_B - 
\big\langle 
\left(
\begin{array}{c}
0 \\ \vec{a}
\end{array}
\right),
\widetilde{\zeta}_B
\left(
\begin{array}{c}
0 \\ \vec{a}
\end{array}
\right)
\big\rangle.
$$
Lemma \ref{lem:adjugate} then allows us to have
\begin{align*}
\big\langle 
\left(
\begin{array}{c}
0 \\ \vec{a}
\end{array}
\right),
\widetilde{\zeta}_B
\left(
\begin{array}{c}
0 \\ \vec{a}
\end{array}
\right)
\big\rangle
&=
\big\langle \vec{a},
\Delta_B \zeta_{A \cap B}^{-1} \vec{a}
+ 
\Delta_{A \cap B}
(\zeta_{A \cap B}^{-1}\vec{b}) {}^t(\zeta_{A \cap B}^{-1}\vec{b})\vec{a}
\big\rangle \\
&=
\Delta_B
\langle \vec{a}, \zeta_{A \cap B}^{-1}\vec{a} \rangle
+
\Delta_{A \cap B}
\big\langle 
\vec{a}, 
\langle \zeta_{A \cap B}^{-1}\vec{b}, 
\vec{a} \rangle \zeta^{-1}_{A \cap B} \vec{b}
\big\rangle \\
&=
\Delta_B
\bigg(
1 - \frac{\Delta_A}{\Delta_{A \cap B}}
\bigg)
+
\Delta_{A \cap B}
\langle \zeta_{A \cap B}^{-1} \vec{b}, \vec{a} \rangle 
\langle \vec{a}, \zeta_{A \cap B}^{-1} \vec{b} \rangle \\
&=
\Delta_B - \frac{\Delta_{A}\Delta_B}{\Delta_{A \cap B}}
+ \Delta_{A \cap B} b_0^2.
\end{align*}
Hence $\Delta({\scriptstyle Z_{12} = 0 }) = \Delta_A\Delta_B/\Delta_{A \cap B} - \Delta_{A \cap B} b_0^2$, and the proof is completed.
\end{proof}

\subsection{The inclusion-exclusion principle}

We now show the inclusion-exclusion principle under the condition $Z_{12} = b_0$.

\begin{lem} \label{lem:magnitude_B}
For $n \ge 3$, let $A, B \subset X_n = \{ 1, 2, \ldots, n \}$ be the subspaces
\begin{align*}
A &= \{ 1, 3, 4, \ldots, n \}, &
B &= \{ 2, 3, 4, \ldots, n \}.
\end{align*}
Suppose that $\Delta_B \Delta_{A \cap B} \neq 0$. Then the magnitude of $B$ is expressed as
$$
\Mag(B) = 
\Mag(A \cap B) 
+ \frac{\Delta_{A \cap B}}{\Delta_B}
(1 - \langle \vec{w}_{A \cap B}, \vec{b} \rangle)^2,
$$
where $\vec{w}_{A \cap B}$ is the weight on $A \cap B$. Also the weight $\vec{w}_B$ on $B$ is expressed as
\begin{align*}
\vec{w}_B &= 
\left(\begin{array}{c} 
\beta \\ 
\vec{w}'_B 
\end{array}
\right)
,
&
&
\left\{
\begin{array}{l}
\beta 
= \frac{\Delta_{A \cap B}}{\Delta_B}
(1 - \langle \vec{b}, \vec{w}_{A \cap B} \rangle), \\
\vec{w}'_B 
= \vec{w}_{A \cap B} - \beta \zeta_{A \cap B}^{-1}\vec{b}.
\end{array}
\right.
\end{align*}
\end{lem}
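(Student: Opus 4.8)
The plan is to compute $\vec{w}_B = \zeta_B^{-1}\1$ directly using the block structure of $\zeta_B$, where the blocks are indexed by $\{2\}$ and $A \cap B = \{3, 4, \ldots, n\}$. Writing
$$
\zeta_B =
\begin{pmatrix}
1 & {}^t\vec{b} \\
\vec{b} & \zeta_{A \cap B}
\end{pmatrix},
\qquad
\vec{w}_B =
\begin{pmatrix}
\beta \\ \vec{w}'_B
\end{pmatrix},
$$
the weighting equation $\zeta_B \vec{w}_B = \1$ becomes the pair of equations $\beta + \langle \vec{b}, \vec{w}'_B \rangle = 1$ and $\beta \vec{b} + \zeta_{A \cap B}\vec{w}'_B = \1$. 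From the second equation, using $\Delta_{A \cap B} \neq 0$, one solves $\vec{w}'_B = \zeta_{A \cap B}^{-1}(\1 - \beta \vec{b}) = \vec{w}_{A \cap B} - \beta \zeta_{A \cap B}^{-1}\vec{b}$, which is exactly the claimed expression for $\vec{w}'_B$ once $\beta$ is pinned down. Substituting this back into the first (scalar) equation gives $\beta + \langle \vec{b}, \vec{w}_{A \cap B}\rangle - \beta \langle \vec{b}, \zeta_{A \cap B}^{-1}\vec{b}\rangle = 1$, i.e. $\beta(1 - \langle \vec{b}, \zeta_{A \cap B}^{-1}\vec{b}\rangle) = 1 - \langle \vec{b}, \vec{w}_{A \cap B}\rangle$. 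By the formula for $\Delta_B$ recorded just before Lemma \ref{lem:adjugate} (with $\vec{a}$ replaced by $\vec{b}$), we have $1 - \langle \vec{b}, \zeta_{A \cap B}^{-1}\vec{b}\rangle = \Delta_B/\Delta_{A \cap B}$, so $\beta = \frac{\Delta_{A \cap B}}{\Delta_B}(1 - \langle \vec{b}, \vec{w}_{A \cap B}\rangle)$, as claimed. Note this uses $\Delta_B \neq 0$ to divide.

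Next I would compute the magnitude. By definition $\Mag(B) = \langle \1, \vec{w}_B \rangle = \beta + \langle \1, \vec{w}'_B \rangle$. Substituting $\vec{w}'_B = \vec{w}_{A \cap B} - \beta \zeta_{A \cap B}^{-1}\vec{b}$ gives
$$
\Mag(B) = \beta + \langle \1, \vec{w}_{A \cap B}\rangle - \beta \langle \1, \zeta_{A \cap B}^{-1}\vec{b}\rangle
= \Mag(A \cap B) + \beta\big(1 - \langle \zeta_{A \cap B}^{-1}\1, \vec{b}\rangle\big),
$$
using symmetry of $\zeta_{A \cap B}^{-1}$ in the last inner product and $\langle \1, \vec{w}_{A \cap B}\rangle = \Mag(A \cap B)$. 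Since $\zeta_{A \cap B}^{-1}\1 = \vec{w}_{A \cap B}$, the bracket is $1 - \langle \vec{w}_{A \cap B}, \vec{b}\rangle$, and plugging in the value of $\beta$ yields
$$
\Mag(B) = \Mag(A \cap B) + \frac{\Delta_{A \cap B}}{\Delta_B}(1 - \langle \vec{w}_{A \cap B}, \vec{b}\rangle)^2,
$$
which is the assertion.

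There is no serious obstacle here; the whole proof is a direct block-matrix manipulation, of exactly the same flavor as (and a special case of the ideas behind) Lemma \ref{lem:adjugate} and Proposition \ref{prop:determinant_general}. The one point requiring a little care is bookkeeping: one must consistently exploit the symmetry $\langle \vec{u}, \zeta_{A \cap B}^{-1}\vec{v}\rangle = \langle \zeta_{A \cap B}^{-1}\vec{u}, \vec{v}\rangle$ and the identities $\vec{w}_{A \cap B} = \zeta_{A \cap B}^{-1}\1$ and $1 - \langle \vec{b}, \zeta_{A \cap B}^{-1}\vec{b}\rangle = \Delta_B/\Delta_{A \cap B}$, the latter being the only place the earlier determinant formula is invoked. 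The hypothesis $\Delta_{A \cap B} \neq 0$ is needed to invert $\zeta_{A \cap B}$ and to guarantee $\vec{w}_{A \cap B}$ exists, while $\Delta_B \neq 0$ is needed to solve for $\beta$; both are assumed, so the argument goes through cleanly.
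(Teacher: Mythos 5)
Your proof is correct and is essentially the computation the paper has in mind: the paper simply cites Lemma \ref{lem:adjugate} (the block form of $\widetilde{\zeta}_B$, hence of $\zeta_B^{-1}$), whereas you re-derive exactly that block inverse by solving $\zeta_B\vec{w}_B=\1$ directly and invoking $\Delta_B=\Delta_{A\cap B}\bigl(1-\langle\vec{b},\zeta_{A\cap B}^{-1}\vec{b}\rangle\bigr)$, which is the same Schur-complement manipulation. All hypotheses ($\Delta_{A\cap B}\neq 0$ to invert, $\Delta_B\neq 0$ to solve for $\beta$) are used where needed, so the argument is complete.
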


\begin{proof}
The expressions directly follow from Lemma \ref{lem:adjugate}.
\end{proof}

\begin{lem} \label{lem:defect}
For $n \ge 3$, let $A, B \subset X_n = \{ 1, 2, \ldots, n \}$ be the subspaces
\begin{align*}
A &= \{ 1, 3, 4, \ldots, n \}, &
B &= \{ 2, 3, 4, \ldots, n \}.
\end{align*}
Suppose $\Delta_{X_n} \Delta_A \Delta_B \Delta_{A \cap B} \neq 0$, and define $\alpha, \beta \in \R$ by
\begin{align*}
\alpha 
&= \frac{\Delta_{A \cap B}}{\Delta_A}
(1 - \langle \vec{a}, \vec{w}_{A \cap B} \rangle), &
\beta
&= \frac{\Delta_{A \cap B}}{\Delta_B}
(1 - \langle \vec{b}, \vec{w}_{A \cap B} \rangle), &
\end{align*}
Then we have
\begin{align*}
&\Mag(X_n) - \Mag(A) - \Mag(B) + \Mag(A \cap B) \\
&\quad
=
\frac{b_0 - Z_{12}}{\Delta_{X_n}}
\bigg(
(b_0 - Z_{12})\Delta_A \alpha^2
+
(b_0 - Z_{12})\Delta_B \beta^2 
+
2 \frac{\Delta_A\Delta_B}{\Delta_{A \cap B}}\alpha \beta
\bigg).
\end{align*}
\end{lem}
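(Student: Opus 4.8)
The plan is to compute $\Mag(X_n)$ explicitly using the block structure $X_n = \{1\} \sqcup \{2\} \sqcup (A \cap B)$, and then subtract off $\Mag(A) + \Mag(B) - \Mag(A \cap B)$. First I would write the weight $\vec{w}_{X_n}$ in the form $(w_1, w_2, \vec{w}'')$ and use Proposition \ref{prop:determinant_general} together with Lemma \ref{lem:formula_for_determinant} (applied to the leading $2\times 2$ block of $\zeta_{X_n}$ with Schur complement $\zeta_{A\cap B}$, or directly to the $1\times(n-1)$ splitting) to get closed-form expressions for $w_1$, $w_2$, and the sum $\langle \1, \vec{w}_{X_n}\rangle$. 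The natural route is: invert $\zeta_{X_n}$ block-wise using $\zeta_{A \cap B}^{-1}$, so that $w_1, w_2$ solve a $2\times 2$ linear system whose matrix is $\begin{pmatrix} 1 - \langle \vec a, \zeta_{A\cap B}^{-1}\vec a\rangle & Z_{12} - b_0 \\ Z_{12} - b_0 & 1 - \langle \vec b, \zeta_{A\cap B}^{-1}\vec b\rangle\end{pmatrix} = \begin{pmatrix} \Delta_A/\Delta_{A\cap B} & Z_{12}-b_0 \\ Z_{12}-b_0 & \Delta_B/\Delta_{A\cap B}\end{pmatrix}$, using $b_0 = \langle \vec a, \zeta_{A\cap B}^{-1}\vec b\rangle$ and the $\Delta_A, \Delta_B$ formulas recorded after Definition \ref{dfn:b_0}. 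The determinant of this $2\times 2$ matrix is exactly $\Delta_{X_n}/\Delta_{A\cap B}$ by Proposition \ref{prop:determinant_general}, which is the crucial cancellation making everything rational in the recorded quantities.

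Next I would express $\Mag(X_n) = \langle \1, \vec{w}_{X_n}\rangle$. Writing $\vec{w}_{X_n} = (w_1, w_2, \vec{w}'')$ with $\vec{w}'' = \vec{w}_{A\cap B} - w_1 \zeta_{A\cap B}^{-1}\vec a - w_2 \zeta_{A\cap B}^{-1}\vec b$ (the block-inverse formula), the sum of entries becomes $w_1(1 - \langle \zeta_{A\cap B}^{-1}\vec a, \1\rangle) + w_2(1 - \langle \zeta_{A\cap B}^{-1}\vec b, \1\rangle) + \Mag(A\cap B)$. I would identify $\langle \zeta_{A\cap B}^{-1}\vec a, \1\rangle = \langle \vec a, \vec{w}_{A\cap B}\rangle$ (since $\zeta_{A\cap B}$ is symmetric and $\zeta_{A\cap B}\vec{w}_{A\cap B} = \1$), and similarly for $\vec b$. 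Comparing with the definitions of $\alpha$ and $\beta$ in Lemma \ref{lem:defect}, note $1 - \langle \vec a, \vec{w}_{A\cap B}\rangle = \frac{\Delta_A}{\Delta_{A\cap B}}\alpha$ and $1 - \langle \vec b, \vec{w}_{A\cap B}\rangle = \frac{\Delta_B}{\Delta_{A\cap B}}\beta$. So $\Mag(X_n) - \Mag(A\cap B) = w_1 \frac{\Delta_A}{\Delta_{A\cap B}}\alpha + w_2 \frac{\Delta_B}{\Delta_{A\cap B}}\beta$, and solving the $2\times 2$ system gives $w_1, w_2$ as explicit combinations of $\alpha, \beta$ and $Z_{12}-b_0$ over $\Delta_{X_n}/\Delta_{A\cap B}$.

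On the other side, by Lemma \ref{lem:magnitude_B} (and its mirror for $A$), $\Mag(A) = \Mag(A\cap B) + \frac{\Delta_{A\cap B}}{\Delta_A}(1 - \langle \vec a, \vec{w}_{A\cap B}\rangle)^2 = \Mag(A\cap B) + \frac{\Delta_A}{\Delta_{A\cap B}}\alpha^2$, and $\Mag(B) = \Mag(A\cap B) + \frac{\Delta_B}{\Delta_{A\cap B}}\beta^2$. Hence $\Mag(A) + \Mag(B) - \Mag(A\cap B) - \Mag(A\cap B) = \frac{\Delta_A}{\Delta_{A\cap B}}\alpha^2 + \frac{\Delta_B}{\Delta_{A\cap B}}\beta^2$. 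Then $\Mag(X_n) - \Mag(A) - \Mag(B) + \Mag(A\cap B)$ equals $\bigl(w_1 \frac{\Delta_A}{\Delta_{A\cap B}}\alpha + w_2 \frac{\Delta_B}{\Delta_{A\cap B}}\beta\bigr) - \bigl(\frac{\Delta_A}{\Delta_{A\cap B}}\alpha^2 + \frac{\Delta_B}{\Delta_{A\cap B}}\beta^2\bigr)$. Substituting the solved values of $w_1, w_2$ and collecting terms, the $\alpha^2, \beta^2, \alpha\beta$ coefficients should assemble into the claimed expression $\frac{b_0 - Z_{12}}{\Delta_{X_n}}\bigl((b_0 - Z_{12})\Delta_A\alpha^2 + (b_0 - Z_{12})\Delta_B\beta^2 + 2\frac{\Delta_A\Delta_B}{\Delta_{A\cap B}}\alpha\beta\bigr)$.

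The main obstacle I anticipate is purely bookkeeping: correctly tracking the three rational factors $\Delta_A/\Delta_{A\cap B}$, $\Delta_B/\Delta_{A\cap B}$, and the denominator $\Delta_{X_n}/\Delta_{A\cap B}$ through the $2\times 2$ inversion, and verifying that the $\alpha^2$ and $\beta^2$ coefficients that drop out of $w_1\alpha + w_2\beta$ combine with the $-\alpha^2, -\beta^2$ terms from $\Mag(A), \Mag(B)$ to leave precisely a factor $(b_0 - Z_{12})$ rather than something quadratic. Concretely, Cramer's rule on the $2\times 2$ system gives $w_1 = \frac{\Delta_{A\cap B}}{\Delta_{X_n}}\bigl(\Delta_B/\Delta_{A\cap B} - (Z_{12}-b_0)\bigr)$ up to the right-hand side, which is $\1$-dependent; the cross term $(Z_{12}-b_0)$ appearing off-diagonal is what produces the $\alpha\beta$ term and the overall single power of $(b_0 - Z_{12})$. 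Once the algebra is arranged so that $\frac{\Delta_A\Delta_B}{\Delta_{A\cap B}} - (Z_{12}-b_0)^2\Delta_{A\cap B} = \Delta_{X_n}$ is invoked at the right moment (Proposition \ref{prop:determinant_general} again), the identity should collapse to the stated form; no genuine difficulty beyond careful expansion is expected.
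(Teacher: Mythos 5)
Your proposal is correct, and it is essentially the paper's own argument: both proceed by block elimination of the $A \cap B$ part of $\zeta_{X_n}$ (using $b_0 = \langle \vec{a}, \zeta_{A\cap B}^{-1}\vec{b}\rangle$ and $1 - \langle \vec{a}, \zeta_{A\cap B}^{-1}\vec{a}\rangle = \Delta_A/\Delta_{A\cap B}$), invoke Lemma \ref{lem:magnitude_B} for $\Mag(A)$ and $\Mag(B)$, and use Proposition \ref{prop:determinant_general} for the cancellation $\Delta_A\Delta_B/\Delta_{A\cap B} - \Delta_{A\cap B}(Z_{12}-b_0)^2 = \Delta_{X_n}$ at exactly the same point. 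The only difference is cosmetic: you solve the $2\times 2$ Schur-complement system for the two new weight components $w_1, w_2$, whereas the paper writes out the full block inverse $\zeta_{X_n}^{-1}$ via Lemma \ref{lem:adjugate}; the resulting computation is the same.
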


\begin{proof}
By Lemma \ref{lem:adjugate}, we can express the inverse of $\zeta_{X_n}$ as
$$
\zeta_{X_n}^{-1}
=
\left(
\begin{array}{ccc}
x & s & {}^t\vec{p} \\
s & y & {}^t\vec{q} \\
\vec{p} & \vec{q} & N
\end{array}
\right),
$$
where $x, y, s \in \R$ are 
\begin{align*}
x &= \frac{\Delta_B}{\Delta_{X_n}}, &
y &= \frac{\Delta_A}{\Delta_{X_n}}, &
s &= -(Z_{12} - b_0)\frac{\Delta_{A \cap B}}{\Delta_{X_n}},
\end{align*}
the vectors $\vec{p}, \vec{q} \in \R^{n-2}$ and the symmetric matrix $N$ of size $n - 2$ are 
\begin{align*}
\vec{p} &= 
- \frac{\Delta_B}{\Delta_{X_n}} \zeta_{A \cap B}^{-1}\vec{a}
+ (Z_{12} - b_0)
\frac{\Delta_{A \cap B}}{\Delta_{X_n}}\zeta_{A \cap B}^{-1}\vec{b}, \\
\vec{q} &=
(Z_{12} - b_0)
\frac{\Delta_{A \cap B}}{\Delta_{X_n}}\zeta_{A \cap B}^{-1}\vec{a}
- \frac{\Delta_A}{\Delta_{X_n}} \zeta_{A \cap B}^{-1}\vec{b}, \\
N &= \zeta_{A \cap B}^{-1} 
+ \frac{\Delta_B}{\Delta_{X_n}}
(\zeta_{A \cap B}^{-1}\vec{a}) {}^t(\zeta_{A \cap B}^{-1}\vec{a})
+ \frac{\Delta_A}{\Delta_{X_n}}
(\zeta_{A \cap B}^{-1}\vec{b}) {}^t(\zeta_{A \cap B}^{-1}\vec{b}) \\
&\quad
- (Z_{12} - b_ 0) \frac{\Delta_{A \cap B}}{\Delta_{X_n}}
\big(
(\zeta_{A \cap B}^{-1}\vec{a}) {}^t(\zeta_{A \cap B}^{-1}\vec{b}) +
(\zeta_{A \cap B}^{-1}\vec{b}) {}^t(\zeta_{A \cap B}^{-1}\vec{a})
\big).
\end{align*}
Using the above expression and Lemma \ref{lem:magnitude_B}, we have
\begin{align*}
\Mag(X_n)
&= \Mag(A \cap B) 
+ \frac{\Delta_A\Delta_B}{\Delta_{X_n}\Delta_{A \cap B}}
(\Mag(A) + \Mag(B) - 2 \Mag(A \cap B)) \\
&\quad
-2(Z_{12}- b_0) \frac{\Delta_{A \cap B}}{\Delta_{X_n}}
(1 - \langle \vec{w}_{A \cap B}, \vec{a} \rangle)
(1 - \langle \vec{w}_{A \cap B}, \vec{b} \rangle).
\end{align*}
This formula and Proposition \ref{prop:determinant_general} lead to
\begin{align*}
\Mag(X_n)
&= 
\Mag(A) + \Mag(B) - \Mag(A \cap B) \\
&\quad
+
(Z_{12} - b_0)^2\frac{\Delta_{A \cap B}}{\Delta_{X_n}}
(\Mag(A) + \Mag(B) - 2 \Mag(A \cap B)) \\
&\quad
-2(Z_{12} - b_0) \frac{\Delta_{A \cap B}}{\Delta_{X_n}}
(1 - \langle \vec{w}_{A \cap B}, \vec{a} \rangle)
(1 - \langle \vec{w}_{A \cap B}, \vec{b} \rangle).
\end{align*}
Using Lemma \ref{lem:magnitude_B} again, we complete the proof. 
\end{proof}

\begin{thm} \label{thm:inclusion_exclusion}
For $n \ge 3$, let $A, B \subset X_n = \{ 1, 2, \ldots, n \}$ be the subspaces
\begin{align*}
A &= \{ 1, 3, 4, \ldots, n \}, &
B &= \{ 2, 3, 4, \ldots, n \}.
\end{align*}
Suppose that: 
\begin{itemize}
\item
$\Delta_{A} \Delta_{B} \Delta_{A \cap B} \neq 0$, and

\item
$Z_{12} = b_0$.

\end{itemize}
Then $\Mag(X_n)$ is defined, and
$$
\Mag(X_n) = \Mag(A) + \Mag(B) - \Mag(A \cap B).
$$
\end{thm}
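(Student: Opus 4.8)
The plan is to deduce Theorem \ref{thm:inclusion_exclusion} directly from Lemma \ref{lem:defect}. That lemma already gives an exact formula for the ``defect''
$$
\Mag(X_n) - \Mag(A) - \Mag(B) + \Mag(A \cap B)
$$
as a multiple of $b_0 - Z_{12}$, so once we impose the hypothesis $Z_{12} = b_0$ the entire right-hand side collapses. Thus the substance of the argument is essentially bookkeeping: check that all the magnitudes appearing are actually defined under the stated nondegeneracy hypotheses, and that the lemma is applicable.

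The key steps, in order, are as follows. First I would observe that $\Delta_A, \Delta_B, \Delta_{A \cap B}$ are all nonzero by hypothesis, so $\Mag(A)$, $\Mag(B)$, and $\Mag(A \cap B)$ are each defined and equal to the sum of the entries of the respective inverse zeta matrices; in particular the weight $\vec{w}_{A \cap B} = \zeta_{A \cap B}^{-1}\1$ exists, which is what Lemma \ref{lem:magnitude_B} and the definition of $\alpha, \beta$ in Lemma \ref{lem:defect} require. Second, I would invoke Proposition \ref{prop:determinant_general}: under $Z_{12} = b_0$ the square term vanishes, giving $\Delta_{X_n} = \Delta_A \Delta_B / \Delta_{A \cap B} \neq 0$. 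Hence $\Mag(X_n)$ is defined, and moreover $\Delta_{X_n} \Delta_A \Delta_B \Delta_{A \cap B} \neq 0$, which is precisely the running hypothesis of Lemma \ref{lem:defect}. Third, apply Lemma \ref{lem:defect}: its right-hand side carries an overall factor $b_0 - Z_{12}$, which is $0$, so the defect is $0$, i.e.
$$
\Mag(X_n) = \Mag(A) + \Mag(B) - \Mag(A \cap B),
$$
as claimed.

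I do not expect a serious obstacle here, since Lemma \ref{lem:defect} does all the heavy lifting; the only point needing a little care is the logical order, namely that one must first use Proposition \ref{prop:determinant_general} to secure $\Delta_{X_n} \neq 0$ \emph{before} one is entitled to quote Lemma \ref{lem:defect}, whose very statement presupposes $\Delta_{X_n} \neq 0$. I would also note explicitly that the conclusion ``$\Mag(X_n)$ is defined'' is not merely a corollary of the formula but follows from $\Delta_{X_n} \neq 0$, and that $n = 3$ is included without special treatment (there $A \cap B = \{3\}$, $\Delta_{\{3\}} = 1 \neq 0$ automatically). With these remarks the proof is complete in a few lines.
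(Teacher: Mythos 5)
Your proposal is correct and follows exactly the paper's own argument: first use Proposition \ref{prop:determinant_general} with $Z_{12}=b_0$ to get $\Delta_{X_n}=\Delta_A\Delta_B/\Delta_{A\cap B}\neq 0$, so $\Mag(X_n)$ is defined, and then apply Lemma \ref{lem:defect}, whose right-hand side vanishes because of the overall factor $b_0-Z_{12}$. Your added care about the logical order (securing $\Delta_{X_n}\neq 0$ before quoting Lemma \ref{lem:defect}) matches the paper's intent, so there is nothing to correct.
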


\begin{proof}
By Proposition \ref{prop:determinant_general}, we have $\Delta_{X_n} \neq 0$, and the magnitude of $X_n$ is defined. Then Lemma \ref{lem:defect} completes the proof.
\end{proof}

Under some assumptions, we can also show the converse of Theorem \ref{thm:inclusion_exclusion}:

\begin{thm} \label{thm:converse}
For $n \ge 3$, let $A, B \subset X_n = \{ 1, 2, \ldots, n \}$ be the subspaces
\begin{align*}
A &= \{ 1, 3, 4, \ldots, n \}, &
B &= \{ 2, 3, 4, \ldots, n \}.
\end{align*}
Suppose $\Delta_{X_n}\Delta_A\Delta_B\Delta_{A \cap B} \neq 0$, and define $\delta \in \R$ by
$$
\delta = \Mag(X_n) - \Mag(A) - \Mag(B) + \Mag(A \cap B).
$$
\begin{enumerate}
\item[(a)]
In the case that $n = 3$, $\delta = 0$ implies $Z_{12} = b_0$.

\item[(b)]
In the case that $n \ge 4$, we assume
\begin{itemize}
\item[(i)]
$X_n$ is positive definite;

\item[(ii)]
$A$ and $B$ are positive weighting; and

\item[(iii)]
$Z_{12} \le b_0$.

\end{itemize}
Then $\delta = 0$ implies $Z_{12} = b_0$.

\end{enumerate}
\end{thm}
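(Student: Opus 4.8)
The plan is to read everything off the identity in Lemma~\ref{lem:defect}. Abbreviating $t := b_0 - Z_{12}$, that identity says
\[
\delta \;=\; \frac{t}{\Delta_{X_n}}\,\Big( t\,\big(\Delta_A\alpha^2 + \Delta_B\beta^2\big) + 2\,\frac{\Delta_A\Delta_B}{\Delta_{A\cap B}}\,\alpha\beta \Big),
\]
so the whole problem reduces to controlling the sign of the factor in large parentheses. The observation that makes this possible is that $\alpha$ and $\beta$ are not merely auxiliary scalars: comparing the definitions in Lemma~\ref{lem:defect} with Lemma~\ref{lem:magnitude_B} (applied to $B$, and then to $A$ after exchanging $1\leftrightarrow 2$ and $A\leftrightarrow B$, which leaves $A\cap B$ fixed), one sees that $\alpha$ is precisely the weight assigned to the point $1$ in the weighting $\vec{w}_A$, and $\beta$ the weight assigned to the point $2$ in $\vec{w}_B$.

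Part (b) is then almost immediate. Positive definiteness of $\zeta_{X_n}$ forces every principal submatrix to be positive definite, so $\Delta_{X_n}, \Delta_A, \Delta_B, \Delta_{A\cap B}$ are all positive; assumption (ii) together with the observation above gives $\alpha>0$ and $\beta>0$; and assumption (iii) gives $t\ge 0$. Substituting into the displayed identity, the large parenthesis is bounded below by $2\Delta_A\Delta_B\alpha\beta/\Delta_{A\cap B} > 0$, so $\delta$ has the same sign as $t$. In particular $\delta = 0$ forces $t = 0$, i.e.\ $Z_{12} = b_0$.

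Part (a) is the one requiring an actual computation. For $n=3$ one has $A\cap B = \{3\}$, hence $\Delta_{A\cap B}=1$, $\vec{w}_{A\cap B}=(1)$, $b_0 = Z_{13}Z_{23}$, $\Delta_A = 1-Z_{13}^2>0$, $\Delta_B = 1-Z_{23}^2>0$, $\alpha = (1+Z_{13})^{-1}>0$, $\beta=(1+Z_{23})^{-1}>0$, and the triangle inequality $Z_{12}\ge Z_{13}Z_{23}$ gives $t\le 0$. Since $\Delta_{X_3}\neq 0$, the identity shows $\delta = 0$ forces $t=0$ or else, after cancelling $t$, $t = t_\ast := -2\Delta_A\Delta_B\alpha\beta/(\Delta_A\alpha^2 + \Delta_B\beta^2)$. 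A short simplification turns this into $t_\ast = -(1-Z_{13}^2)(1-Z_{23}^2)/(1-Z_{13}Z_{23})$, i.e.\ $Z_{12} = Z_{13}Z_{23} + (1-Z_{13}^2)(1-Z_{23}^2)/(1-Z_{13}Z_{23})$. I would then rule this value out by hand: substituting it, the differences $Z_{13}/Z_{23} - Z_{12}$ and $Z_{23}/Z_{13} - Z_{12}$ reduce to positive multiples of $Z_{13}-Z_{23}$ and of $Z_{23}-Z_{13}$ respectively, so they cannot both be nonnegative unless $Z_{13}=Z_{23}$; hence one of the triangle inequalities $Z_{13}\ge Z_{12}Z_{23}$, $Z_{23}\ge Z_{12}Z_{13}$ is violated, and in the remaining case $Z_{13}=Z_{23}$ the formula forces $Z_{12}=1$, contradicting $d(1,2)>0$. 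Thus $t=0$ and $Z_{12}=b_0$.

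I expect the only real obstacle to be this last elimination in part (a): identifying and discarding the spurious root $t_\ast$ comes down, once more, to the triangle inequality, just as in Section~\ref{sec:proof}. Everything else is bookkeeping --- chiefly the recognition that $\alpha,\beta$ are the weights of the two ``private'' points $1$ and $2$, after which the hypotheses (i)--(iii) are exactly what is needed to force the parenthetical factor to be positive.
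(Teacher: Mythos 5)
Your proposal is correct. Part (b) is essentially the paper's own argument: positive definiteness of $X_n$ forces $\Delta_{X_n},\Delta_A,\Delta_B,\Delta_{A\cap B}>0$, the identification of $\alpha$ and $\beta$ via Lemma \ref{lem:magnitude_B} as the weight components at the points $1\in A$ and $2\in B$ turns hypothesis (ii) into $\alpha,\beta>0$, and (iii) makes the parenthetical factor in Lemma \ref{lem:defect} at least $2\Delta_A\Delta_B\alpha\beta/\Delta_{A\cap B}>0$, so $\delta=0$ forces $Z_{12}=b_0$; this is exactly how the paper proceeds. In part (a) you genuinely diverge: the paper does not invoke Lemma \ref{lem:defect} at all for $n=3$, but instead writes an explicit factorization of $\delta$ as $-(Z_{12}-b_0)$ times a quantity shown to be positive by expanding $\Delta_{X_3}-(1-Z_{12})(Z_{12}-Z_{13}Z_{23})$ as a sum of products of triangle-inequality differences, so the vanishing of $\delta$ immediately kills the linear factor. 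You instead stay within the general quadratic identity, observe that $\delta=0$ leaves only $t=0$ or the spurious root $t_\ast=-(1-Z_{13}^2)(1-Z_{23}^2)/(1-Z_{13}Z_{23})$ (your computations of $\alpha=(1+Z_{13})^{-1}$, $\beta=(1+Z_{23})^{-1}$, $b_0=Z_{13}Z_{23}$ and of $t_\ast$ are correct), and then eliminate $t_\ast$ because $Z_{13}/Z_{23}-Z_{12}$ and $Z_{23}/Z_{13}-Z_{12}$ become positive multiples of $Z_{13}-Z_{23}$ and $Z_{23}-Z_{13}$, so one of the triangle inequalities fails unless $Z_{13}=Z_{23}$, which forces the impossible value $Z_{12}=1$; this checks out. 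The trade-off: your route is uniform with the machinery used for (b) (both cases reduce to analyzing the same quadratic in $t$), at the cost of an extra root-elimination step, whereas the paper's ad hoc $n=3$ formula makes the positivity of the cofactor of $Z_{12}-b_0$ visible at a glance; in both treatments the triangle inequality is the decisive input. One small point of hygiene: as written your $t_\ast$ omits the factor $\Delta_{A\cap B}$ that belongs in the denominator of the general expression; this is harmless only because $\Delta_{A\cap B}=1$ when $n=3$, and it would be worth saying so explicitly.
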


Note that a metric space is said to be \textit{positive weighting} if it admits a weight whose components are positive \cite{L1}. It is known that metric spaces consisting of less than or equal to three points are positive weighting \cite{L1} (Proposition 2.4.15). As reviewed in \S\ref{sec:introduction}, the metric space $X_n$ is positive definite if $n \le 4$. Thus, in the case of $n = 4$, the assumptions (i) and (ii) are redundant.

\begin{proof}[Proof of Theorem \ref{thm:converse}]
(a) 
In the case that $n = 3$, we have an expression
$$
\delta
=
-\frac{2(Z_{12} - b_0)(\Delta_{X_3} - (1-Z_{12})(Z_{12} - Z_{13}Z_{23}))}
{(1 + Z_{12})(1 + Z_{13})\Delta_{X_3}}.
$$
Then $\delta = 0$ implies $Z_{12} = b_0$, since the following factor is positive:
\begin{align*}
&\Delta_{X_3}  - (1-Z_{12})(Z_{12} - Z_{13}Z_{23}) \\
&=
(1 - Z_{12})(1 - Z_{13})(1 - Z_{23}) \\
&\quad
+ (1 - Z_{13})(Z_{13} - Z_{12}Z_{23})
+ (1 - Z_{23})(Z_{23} - Z_{12}Z_{13}).
\end{align*}

(b) 
In general, if a finite metric space is positive definite, then so is its subspace \cite{L1} (Lemma 2.4.2). Hence $\Delta_A$, $\Delta_B$ and $\Delta_{A \cap B}$ are positive by (i). By Lemma \ref{lem:magnitude_B} and (ii), we also see that $\alpha$ and $\beta$ are positive. Thus, further assuming (iii), we get the positivity of the factor
$$
(b_0 - Z_{12})\Delta_A \alpha^2
+
(b_0 - Z_{12})\Delta_B \beta^2 
+
2 \frac{\Delta_A\Delta_B}{\Delta_{A \cap B}}\alpha \beta
$$
in the formula of $\delta$ in Lemma \ref{lem:defect}. Therefore $\delta = 0$ implies $Z_{12} = b_0$. 
\end{proof}

\medskip

\begin{rem}
Suppose $n \ge 2$. For $i = 0, \ldots, n-1$, we define a subspace $X^{(i)}$ of the $n$-point metric space $X_n = \{ 1, 2, \ldots, n \}$ by $X^{(i)} = \{ i+1, i + 2, \ldots, n \}$. For $i = 1, \ldots, n-1$, we also define a vector $\vec{x}_i \in \R^{n - i}$ by ${}^t\vec{x}_i = (Z_{ij})_{j = i+1}^n$. If $\Delta_{X^{(i)}} \neq 0$ for $i = 0, 1, \cdots$, then Lemma \ref{lem:magnitude_B} leads to the following formula for the magnitude of $X_n = X^{(0)}$
\begin{align*}
\Mag(X^{(0)}) 
&= \frac{\Delta_{X^{(1)}}}{\Delta_{X^{(0)}}}
(1 - \langle \vec{w}_{X^{(1)}}, \vec{x}_1 \rangle)^2 + \Mag(X^{(1)}) \\
&=
\sum_{i = 1}^{n-1}
 \frac{\Delta_{X^{(i)}}}{\Delta_{X^{(i-1)}}}
(1 - \langle \vec{w}_{X^{(i)}}, \vec{x}_i \rangle)^2 + 1,
\end{align*}
where $\vec{w}_{X^{(i)}} = \zeta_{X^{(i)}}^{-1}\1 \in \R^{n-i}$ is the weight on $X^{(i)}$. A consequence of the formula above is another proof for the inequalities $\Mag(X) \ge \Mag(Y) \ge 1$ valid for any non-empty subspace $Y$ in a positive definite finite metric space $X$, which have been shown in \cite{L1} (Corollary 2.4.4, Corollary 2.4.5).
\end{rem}

\subsection{Comparison of conditions}

We compare the condition for the inclusion-exclusion principle in Theorem \ref{thm:inclusion_exclusion} with that in Theorem \ref{thm:inclusion_exclusion_Leinster} applied to our choice of the subspaces.

\begin{lem} \label{lem:comparison}
For $n \ge 3$, let $A$ and $B$ be the following subspaces of $(X_n, d)$
\begin{align*}
A &= \{ 1, 3, 4, \ldots, n \}, &
B &= \{ 2, 3, 4, \ldots, n \}.
\end{align*}

\begin{itemize}
\item[(a)]
(C1) is equivalent to $b_- = Z_{12}$.

\item[(b)]
Suppose $\Delta_{A \cap B} \neq 0$. Then (C2) implies $b_- = b_0$.

\end{itemize}
\end{lem}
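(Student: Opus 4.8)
The plan is to prove the two statements of Lemma \ref{lem:comparison} separately, since (a) is a purely metric/combinatorial unwinding of the definitions, while (b) requires the determinant identities established earlier.

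For part (a), I would begin by recalling that $A \cap B = \{3, 4, \ldots, n\}$, that the only pair in $A \times B$ not already lying in $A \cap B$ (up to the irrelevant diagonal and symmetry) is the pair $(1, 2)$, and that every other pair $(a,b) \in A \times B$ either has $a = b \in A \cap B$ (so (C1) holds trivially with $c = a$) or has at least one of $a, b$ in $A \cap B$ (so again (C1) holds with $c$ equal to that point). Hence (C1) reduces to the single requirement: there exists $j \in \{3, \ldots, n\}$ with $d(1,2) = d(1,j) + d(j,2)$. Passing through the exponential $Z_{ij} = e^{-d(i,j)}$, the equation $d(1,2) = d(1,j) + d(j,2)$ becomes $Z_{12} = Z_{1j}Z_{2j}$. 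Since the triangle inequality forces $Z_{12} \ge Z_{1j}Z_{2j}$ for every $j$, the existence of such a $j$ is exactly the statement that $Z_{12}$ equals the maximum, i.e. $Z_{12} = b_- = \max_{3 \le j \le n} Z_{1j}Z_{2j}$. I would write this chain of equivalences out carefully, taking a moment to confirm that no other pair in $A \times B$ can impose a nontrivial constraint.

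For part (b), assume $\Delta_{A \cap B} \neq 0$ and suppose (C2) holds; by symmetry in $A$ and $B$ it suffices to treat $(\mathrm{C2}_a)$, say. The plan is to show that the projection map $\pi : A \to A \cap B$ forces $\zeta_{A \cap B}^{-1}\vec{a}$ to coincide with $\zeta_{A \cap B}^{-1}\vec{b}$ after suitable scaling, or more directly that $b_0 = \langle \vec a, \zeta_{A\cap B}^{-1}\vec b\rangle$ collapses to $b_-$. Concretely, $(\mathrm{C2}_a)$ applied to the single extra point $1 \in A$ yields $\pi(1) = j_0 \in A \cap B$ with $d(1, c) = d(1, j_0) + d(j_0, c)$ for every $c \in A \cap B$; exponentiating gives $Z_{1c} = Z_{1 j_0} Z_{j_0 c}$ for all $c \in \{3, \ldots, n\}$, i.e. $\vec a = Z_{1 j_0}\,\zeta_{A \cap B}\,\vec e_{j_0}$ where $\vec e_{j_0}$ is the standard basis vector indexed by $j_0$ (using that the $j_0$-th row of $\zeta_{A \cap B}$ has entries $Z_{j_0 c}$ with $Z_{j_0 j_0} = 1$). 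Since $\Delta_{A \cap B} \neq 0$, this gives $\zeta_{A \cap B}^{-1}\vec a = Z_{1 j_0}\,\vec e_{j_0}$, whence $b_0 = \langle \vec a, \zeta_{A \cap B}^{-1}\vec b\rangle = Z_{1 j_0}\,\langle \vec a, \vec e_{j_0}\rangle = Z_{1 j_0} Z_{2 j_0}$ (the last step reading off the $j_0$-th component of $\vec a$, which is $Z_{1 j_0}$ — wait, one must be careful: $\langle \vec a, \vec e_{j_0}\rangle$ is the $j_0$-component of $\vec a$, namely $Z_{1 j_0}$, so we instead get $b_0 = Z_{1 j_0}\cdot$(component of $\vec b$ at $j_0$)$= Z_{1 j_0} Z_{2 j_0}$). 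Then I must separately check $Z_{1 j_0} Z_{2 j_0} = b_-$: from $Z_{1c} = Z_{1 j_0} Z_{j_0 c}$ we get $Z_{1 c} Z_{2 c} = Z_{1 j_0} Z_{j_0 c} Z_{2 c} \le Z_{1 j_0} Z_{2 j_0}$ for all $c$ (using $Z_{j_0 c} Z_{2 c} \le Z_{2 j_0}$ by the triangle inequality in $A \cap B$), so $Z_{1 j_0} Z_{2 j_0}$ is the maximum, i.e. equals $b_-$. Combining, $b_0 = b_-$.

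The main obstacle I anticipate is the bookkeeping in part (b): getting the indexing of $\zeta_{A \cap B}$, the vectors $\vec a, \vec b$, and the basis vector $\vec e_{j_0}$ to line up correctly, and making sure the two reductions (from (C2) to the single gated point, and from the gated condition to $b_0 = Z_{1j_0}Z_{2j_0}$, then to $b_0 = b_-$) are each airtight. A secondary subtlety is confirming that $(\mathrm{C2}_b)$ gives the same conclusion by the obvious symmetry $A \leftrightarrow B$, $\vec a \leftrightarrow \vec b$, which leaves $b_0$ and $b_-$ invariant; this should be a one-line remark rather than a separate computation.
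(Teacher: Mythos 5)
Your proposal is correct and follows essentially the same route as the paper: part (a) is the same unwinding of (C1), and for (b) the paper likewise reduces to ($\mathrm{C2}_a$) by the permutation exchanging $1$ and $2$, uses the gate of the point $1$ (normalized there to $\pi(1)=3$), and verifies $b_-=Z_{13}Z_{23}=b_0$ by the same triangle-inequality estimate together with the relation $\vec{a} = Z_{13}\,\zeta_{A\cap B}\vec{e}_3$, which the paper phrases as a row relation in the determinant defining $b_0$ and you phrase as $\zeta_{A\cap B}^{-1}\vec{a} = Z_{1j_0}\vec{e}_{j_0}$. The only blemish is the momentary slip $\langle\vec{a},\zeta_{A\cap B}^{-1}\vec{b}\rangle = Z_{1j_0}\langle\vec{a},\vec{e}_{j_0}\rangle$, which you correctly repair in place using the symmetry of $\zeta_{A\cap B}^{-1}$.
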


\begin{proof}
The equivalence in (a) is clear, so we prove (b). Note that $b_-$ and $b_0$ are invariant under the permutation $(12)$ exchanging $1, 2 \in X_n$, while this permutation exchanges the conditions ($\mathrm{C1}_a$) and ($\mathrm{C1}_b$). Therefore it suffices to show that ($\mathrm{C1}_a$) implies $b_- = b_0$. Note also that $b_-$ and $b_0$ are also invariant under any permutations of $A \cap B$. Thus, in ($\mathrm{C1}_a$), we can assume that $\pi(1) = 3$. In this case, ($\mathrm{C1}_a$) is equivalent to the triangle equalities $Z_{1j} = Z_{13}Z_{3j}$ for $4 \le j \le n$. Then the triangle inequality $Z_{23} \ge Z_{2j}Z_{3j}$ leads to 
$$
Z_{1j}Z_{2j} = Z_{13}Z_{3j}Z_{2j} \le Z_{13}Z_{23}.
$$
Therefore $b_- = Z_{13}Z_{23}$. Now, noting the relation of vectors
$$
(0, Z_{13}, Z_{14}, Z_{15}, \ldots, Z_{1n})
= Z_{13}(0, 1, Z_{34}, Z_{35}, \ldots, Z_{3n}),
$$
we can directly verify $b_0 = Z_{13}Z_{23}$.
\end{proof}

In the case that $n = 3$, we always have $b_- = b_0 = Z_{13}Z_{23}$. Hence no difference arises. In the case that $n = 4$, we have $b_- \le b_0$ by Lemma \ref{lem:b_0}. Therefore there exists a metric on $X_4$ such that $b_- < b_0 = Z_{12}$. (An example is provided in Remark \ref{rem:Mayer_Vietoris} below.) In this case, our inclusion-exclusion principle in Theorem \ref{thm:inclusion_exclusion} is not covered by that in Theorem \ref{thm:inclusion_exclusion_Leinster}.

\begin{rem} \label{rem:converse_to_C2_implication}
The converse of Lemma \ref{lem:comparison} (b) holds true if $n = 3, 4$: This is clear in the case of $n = 3$. In the case of $n = 4$, we can see that $b_- = b_0$ implies (C2) by using the expressions of $b_0$ proving $b_- \le b_0$ in the proof of Lemma \ref{lem:b_0}. In the case of $n = 5$, there exists an example such that the converse does not hold. 

To describe the example, let $S^1 \subset \R^2$ be the unit circle centred at the origin. By the geodesic distance, $S^1$ gives rise to a metric space. We write $p(\theta) = (\cos \theta, \sin \theta) \in S^1$ for the point specified by $\theta \in \R/2\pi \Z$. As in Figure \ref{fig:1} (left), let  $X_5 = \{ 1, 2, 3, 4, 5 \} \subset S^1$ be the subspace consisting of 
\begin{align*}
1 &= p(0), &
2 &= p(3\pi/4), &
3 &= p(\pi/2), &
4 &= p(-\pi/2), &
5 &= p(\pi).
\end{align*}
In the case of $n = 5$, we can generally express $b_0$ as
$$
b_0 = Z_{13}Z_{23} + \frac{
(Z_{14} - Z_{13}Z_{34})P + (Z_{15}-Z_{13}Z_{35})Q
}{\Delta_{345}},
$$
where $P$ and $Q$ are 
\begin{align*}
P
&=
(1 - Z_{35}^2)(Z_{24} - Z_{23}Z_{34})
- (Z_{25} - Z_{23}Z_{35})(Z_{45} - Z_{34}Z_{35}) \\
&=
(1 - Z_{35}^2)(Z_{24} - Z_{25}Z_{45})
- (Z_{23} - Z_{25}Z_{35})(Z_{34} - Z_{35}Z_{45}), \\
Q
&=
(1 - Z_{34}^2)(Z_{25} - Z_{23}Z_{35})
- (Z_{24} - Z_{23}Z_{34})(Z_{45} - Z_{34}Z_{35}) \\
&=
(1 - Z_{34}^2)(Z_{25} - Z_{24}Z_{45})
- (Z_{23} - Z_{24}Z_{34})(Z_{35} - Z_{34}Z_{45}).
\end{align*}
For the present example, we have $Z_{12} = Z_{13}Z_{23}$, so that $b_- = Z_{13}Z_{23}$. From $Z_{24} = Z_{25}Z_{45}$ and $Z_{34} = Z_{35}Z_{45}$, it follows that $P = 0$. Since $Z_{15} = Z_{13}Z_{35}$, we find $b_0 = Z_{13}Z_{23} = b_-$. However, (C2) is not satisfied.
\end{rem}

\begin{figure}[thbp]
\begin{tabular}{ccccc}

\begin{tikzpicture}
\draw[thin,gray] (0,0) circle (1);

\filldraw [black] (0:1) circle (1pt) node [anchor = west] {$1$};
\filldraw [black] (90:1) circle (1pt) node [anchor = south] {$3$};
\filldraw [black] (135:1) circle (1pt) node [anchor = south] {$2$};
\filldraw [black] (180:1) circle (1pt) node [anchor = east] {$5$};
\filldraw [black] (270:1) circle (1pt) node [anchor = north] {$4$};

\draw[thin, gray, dashed] (0:1)--(180:1);
\draw[thin, gray, dashed] (90:1)--(270:1);
\end{tikzpicture}

&
&
&
&

\begin{tikzpicture}
\draw[thin,gray] (0,0) circle (1);

\filldraw [black] (0:1) circle (1pt) node [anchor = west] {$1$};
\filldraw [black] (45:1) circle (1pt) node [anchor = south] {$3$};
\filldraw [black] (90:1) circle (1pt) node [anchor = south] {$2$};
\filldraw [black] (180:1) circle (1pt) node [anchor = east] {$5$};
\filldraw [black] (270:1) circle (1pt) node [anchor = north] {$4$};

\draw[thin, gray, dashed] (0:1)--(180:1);
\draw[thin, gray, dashed] (90:1)--(270:1);
\end{tikzpicture}

\end{tabular}
\caption{$X_5$ realized in the geodesic circle $S^1$.}
\label{fig:1}
\end{figure}
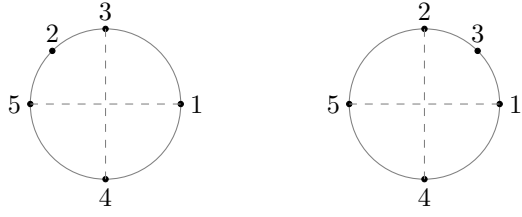

\smallskip

\begin{rem}
We have $b_- = b_0$ if $n = 3$, and $b_- \le b_0$ if $n = 4$ as shown in Lemma \ref{lem:b_0}. However, in the case that $n = 5$, there exists an example such that $b_0 < b_-$. Using the notations in Remark \ref{rem:converse_to_C2_implication}, we let $X_5 = \{ 1, 2, 3, 4, 5 \} \subset S^1$ be the subspace consisting of\begin{align*}
1 &= p(0), &
2 &= p(\pi/2), &
3 &= p(\pi/4), &
4 &= p(-\pi/2), &
5 &= p(\pi),
\end{align*}
as illustrated in Figure \ref{fig:1} (right). We have $b_- = Z_{13}Z_{23}$ because $Z_{12} = Z_{13}Z_{23}$. We also have $P < 0$ by $Z_{24} = Z_{23}Z_{34}, Z_{25} > Z_{23}Z_{35}$ and $Z_{45} > Z_{34}Z_{35}$. Finally, by $Z_{14} > Z_{13}Z_{34}$ and $Z_{15} = Z_{13}Z_{35}$, we see $b_0 < b_-$.
\end{rem}

\medskip

\begin{rem} \label{rem:Mayer_Vietoris}
\textit{Magnitude homology} \cite{L-S} is a notion which categorifies the magnitude of a finite metric space. If a finite metric space $(X, d)$ satisfies (C1) and (C2), then its magnitude homology fits into a (splitting) \textit{Mayer-Vietoris exact sequence} \cite{B-K,T-Y}. Generally, the Mayer-Vietoris sequence for the magnitude homology implies the inclusion-exclusion formula for the magnitude \cite{H-W,L-S}. Therefore a natural question is whether the magnitude homology of a finite metric space subject to $Z_{12} = b_0$ fits into the Mayer-Vietoris sequence. It turns out that $Z_{12} = b_0$ does not generally lead to the Mayer-Vietoris sequence. 

This is seen by an example: Consider the metric $d$ on $X_4 = \{ 1, 2, 3, 4 \}$ such that:
\begin{itemize}
\item
$d(i, j) = 1$ for distinct $i, j \in A = \{ 1, 3, 4 \}$, and also $d(i, j) = 1$ for distinct $i, j \in B = \{ 2, 3, 4 \}$.

\item
$d(1, 2) = \log\big(\frac{e^2 + e}{2}\big) = 1.6201 \cdots$.

\end{itemize}
Note that $b_- = e^{-2} < b_0 = Z_{12} = 2 e^{-2}/(1 + e^{-1})$, hence the inclusion-exclusion principle is satisfied. The subspaces $A$, $B$ and $A \cap B$ can be identified with the metric spaces associated to the complete graphs. Thus, on the one hand, the magnitude homology groups $MH^\ell_n(A)$, $MH^\ell_n(B)$ and $MH^\ell_n(A \cap B)$ of the subspaces are trivial for all $n \in \Z$ provided that $\ell = d(1, 2)$. On the other hand, we readily see that $MH^\ell_1(X_4) \cong \Z^2$ for $\ell = d(1, 2)$. Therefore we just get a sequence which is not exact:
$$
\cdots \to
\underbrace{MH^\ell_1(A \cap B)}_0 \to
\underbrace{MH^\ell_1(A)}_0 \oplus \underbrace{MH^\ell_1(B)}_0 \to
\underbrace{MH^\ell_1(X_4)}_{\Z^2} \to
\underbrace{MH^\ell_0(A \cap B)}_0 \to
\cdots.
$$
\end{rem}


\end{document}